%(First draft March 29, 2022)
\documentclass[10pt, leqno]{amsart}

\usepackage[utf8]{inputenc}
\usepackage{amssymb}
\usepackage{amsmath,amscd}
\usepackage[initials,nobysame,alphabetic]{amsrefs}
\usepackage{amsmath, amssymb}
\usepackage{amsfonts}
\usepackage{mathrsfs}
\usepackage{mathpazo}
\usepackage{mathtools}
\usepackage[arrow,matrix,curve,cmtip,ps]{xy}
\usepackage{amsthm}
\usepackage{float}
\usepackage{hyperref}
\usepackage{graphics}
\usepackage{graphicx}
\usepackage{verbatim}
\usepackage{multirow}
\usepackage{tikz}
\usepackage{enumerate}
\interfootnotelinepenalty=10000

\allowdisplaybreaks

\newtheorem{theorem}{Theorem}[section]

\newtheorem{proposition}[theorem]{Proposition}
\newtheorem{corollary}[theorem]{Corollary}

\newtheorem*{theorem*}{Theorem}
\newtheorem{remark}[theorem]{Remark}

%%%%%%%%%%%%%%%%%%

%this has equations numbered within sections 1.1,1.2, ... 2.1,...
\numberwithin{equation}{section}

%-------------------------------------------
%       Begin Local Macros
%-------------------------------------------

%%%%---------------------------------------
%%% Very local macros%%%%%%%%%%%%%%%%%%%%%%%%%%%

%\def\i{\infty}             %\def\D{\varDelta}     % \def\cA{\Cal A}
%\def\a{\alpha}             %\def\P{\varPhi}       % \def\cG{\Cal G}
%\def\b{\beta}                                    % \def\C{\Cal C}
%\def\g{\gamma}                                   % \def\D{\Cal D}
%\def\d{\delta}             %\def\L{\varLambda}    % \def\cP{\Cal P}
%\def\ep{\varepsilon}       %\def\G{\Gamma}        % \def\M{\Cal M}
%\def\s{\sigma}             \def\fM{\eusm M}       %\def\cL{\Cal L}
\def\th{\theta}                    %\def\cO{\Cal O}

\def\z{\zeta}                  %\def\B{\Cal B}
%\def\w{\omega}
%\def\p{\psi}
%\def\l{\lambda}
%\def\k{\kappa}
%\def\e{\eta}
%\def\la{\langle}       \def\ra{\rangle}
%\def\o{\over}          \def\ov{\overline}
%\def\Inf{\mathop{\famzero inf\vphantom p}}
%\def\wt{\widetilde}    \def\wh{\widehat}
%\def\der{{\tsize{\partial\o\partial r}}}
%\def\des{{\tsize{\partial\o\partial s}}}
%\def\det{\tsize{\partial\o\partial t}}
%\def\ms{\medskip} 
%\def\no{\noindent}
%\def\essup{\mathrm{ess}\sup}

%%%%%%%%%%%%%%%%END%%%%%%%%%%%%%%%%%%%%%%%%%%%%

%%%%%%%%%%%%%%%%%%%%%%%%%%%%%%%%%%%%%%%%%%%%
%\documentclass[final,10pt]{article}
%\usepackage[utf8]{inputenc}
%\usepackage{enumerate,indentfirst}
%\usepackage{fontenc,textcomp,dsfont,latexsym}
%\usepackage{fontenc,textcomp,latexsym}
%\usepackage{amsfonts,amsmath,amsthm,amssymb}
%Usepackage[english]{babel}
%Usepackage[latin1]{inputenc}
%\usepackage{hyperref}
\usepackage{color}

\newtheorem{Remark}{Remark}[part]

\def \t{\tau}

\renewcommand{\baselinestretch}{1.1}
\newcommand{\nc}{\newcommand}
\nc{\esssup}{\mathop{\mathrm{ess\,sup}}}
\nc{\essinf}{\mathop{\mathrm{ess\,inf}}}
\nc{\argmax}{\mathop{\mathrm{arg\,max}}}
\def \lb {\label}

\def \nd {\noindent}
\def \ms {\medskip}

\def \n{\nu}
\def \q {\quad}
\def \qq{\qquad}
\def \P{\mathbb{P}}

\def \R{\mathbb{R}}

\def \E{\mathbb{E}}
\def \F{\mathbb{F}}
\def \U{\mathbb{U}}

\def \D{\mathbb{D}}

\def \1{\mathbf{1}}

\def \Fc{\mathcal{F}}

\def \eps{\varepsilon}
\def \nn {\nonumber}
\def \ed {\end{document}}

\def \b {\beta}

%%%%%%%%%%

%%%%%%%%%%%%

%\def\beqs{\begin{eqnarray*}}
\def\enqs{\end{eqnarray*}}
\def\beq{\begin{eqnarray}}
\def\enq{\end{eqnarray}}

               %%%symbole somme
                       %%symbole union
                        %symboleintersection

                  %%symbole integrale

\addtolength{\oddsidemargin}{-0.1 \textwidth}
\addtolength{\textwidth}{0.2 \textwidth}
\addtolength{\topmargin}{-0.1 \textheight}
\addtolength{\textheight}{0.2 \textheight}

\def \d {\delta}

\def \om {\Omega}
\def \dl {\Delta}
\def \mx {\mbox}
\def \ca {\mathcal {A}}
\def \co {{\mathcal O}}
\def \be {\begin{equation}}
\def \ee {\end{equation}}
\def \rw {\rightarrow}
\def \rwi {{\rw +\infty}}
\def \Dl {\Delta}
\def \ct{ c_{\theta} }

\def \nx{(\nu,\xi)}
\def \rwf{\rightarrow \infty}
\def \bn {\bar{\mathbb{N}}}
\def \nb {\mathbb{N}}
\def \bbn{\overline{\nb}}
%%%%%%%%%%%%%%%%%%%%%%%%%%%%%%%%%%%%%%%%%%%%%%%%

\begin{document}

\title{Discrete time Stochastic Impulse Control with Delay}
\author{Boualem Djehiche}
\thanks{B. Djehiche is partially supported by the Lebesgue Center of Mathematics "Investissements d’avenir" program-ANR-11-LABX-0020-01}
\address{Department of Mathematics \\ KTH Royal Institute of Technology \\ 100 44, Stockholm \\ Sweden}
\email{boualem@kth.se}
\author{Said Hamad{\`e}ne}
\address{LMM, Le Mans University, Avenue Olivier Messiaen, 72085 Le Mans, Cedex 9, France} 
\email{hamadene@univ-lemans.fr}
\date{\today}
%\date{First version: May 15, 2024. This version: \today}
%\date{Revised version: \today}
%\footnote{B. Djehiche is partially supported by the Lebesgue Center of Mathematics "Investissements d’avenir" program-ANR-11-LABX-0020-01}
\subjclass[2010]{60G40, 60H10, 60H07, 90C20, 49N90}

\keywords{Optimal impulse control; Execution delay; Infinite Horizon; Snell envelope;  Stochastic control; Optimal stopping time.}

\maketitle
%{\it \centering\footnotesize Dedicated to Professor Alain Bensoussan on the occasion of his 85th birthday.\par}

\begin{abstract}
We study a class of infinite-horizon impulse control problems with execution delay in discrete time. Using probabilistic methods, particularly the notion of the Snell envelope of processes, we construct an optimal strategy among all admissible strategies for both risk-neutral and risk-sensitive utility functions. Furthermore, we establish the existence of bounded $\varepsilon$-optimal strategies. This framework provides a robust approach to handling execution delays in discrete-time stochastic systems.
\end{abstract}

\tableofcontents
\section{Introduction}

Impulse control is an essential part of stochastic control theory, where control affects both the timing and the jump size of the state because interventions occur at discrete times. These problems naturally arise in many practical situations where actions cannot be taken continuously but must be implemented at specific moments. Examples include financial decision-making, inventory control, and energy management systems, where optimal strategies must account for both timing and magnitude of interventions.

The formal study of impulse control began with the pioneering work of Bensoussan and Lions \cite{Bensoussan84}, who introduced in a continuous time setting a rigorous framework based on quasi-variational inequalities (QVI). They established a connection between impulse control, variational inequalities, and the dynamic programming principle, laying the foundation for further developments. In the continuous-time setting, QVIs provide a powerful mathematical tool to characterize optimal strategies in terms of stopping times and intervention decisions. 

While continuous-time models are analytically appealing, a discrete-time formulation is often more suitable for real-world applications, particularly in scenarios where decision-making occurs at a finite number of times. Bensoussan \cite{bensoussan08} considers impulse control in a discrete time Markovian set-up, using a dynamic programming framework, which provides the possibility of obtaining analytic solutions for applications to inventory control and portfolio choice under constraints.

An additional layer of complexity arises in impulse control when execution delays are introduced. Execution delay refers to the time lag between the moment a control action is decided and its actual implementation. This phenomenon is ubiquitous in practice, for example, in financial markets where trades take time to clear or in manufacturing systems where orders experience processing delays. Delays can significantly affect optimal decision making, as strategies must account for not only current system states but also the anticipated evolution during the delay period. 

 Bruder and Pham \cite{Bruder} tackled this challenge for finite-horizon impulse control problems with execution delays in a Markovian setting. They use the dynamic programming principle to derive the Hamilton-Jacobi-Bellman equation satisfied by the value function of the problem.

In the context of processes with memory effects, Djehiche, Hamadène, and Hdhiri \cite{Djehiche10} extended impulse control to non-Markovian processes. They established the existence of optimal strategies, using the Snell envelope in conjunction with reflected BSDEs. Their results emphasize the importance of incorporating path dependencies and history when modeling real-world systems subject to delays.

Risk-sensitive control problems, which address decision-making under e.g. risk aversion, further complicate the impulse control framework. Hdhiri and Karouf \cite{Hdhiri2} considered finite- horizon  risk-sensitive impulse control of non-Markovian processes, incorporating execution delays and random coefficients. Their work shows that optimal solutions must carefully balance the risk-sensitive nature of the utility function with the inherent uncertainties of delayed control. Infinite-horizon stochastic impulse control with delay and random
coefficients is studied in \cite{Djehiche22}.

Optimal stochastic control problems involving delays are not restricted to standard Markovian or non-Markovian processes. For example, Robin \cite{Robin1} studied impulse control for Markov processes with delays as early as 1976, providing foundational insights into the role of delay in controlling stochastic systems. Similarly, Cadenillas and Zapatero \cite{cadzap2}, Palczewski and Stettner \cite{Palczewski}, and Oksendal and Sulem \cite{Oksendal2}, among many other authors, applied impulse control techniques to portfolio optimization, showing their relevance to financial mathematics and the importance of delays in practical decision making.

Despite significant progress, much of the existing literature has focused on finite-horizon problems in a continuous-time setting. However, many applications require considering infinite-horizon problems, where interventions must be optimized over an indefinite future. This introduces additional technical challenges, including ensuring the existence of optimal strategies and bounding long-term performance in the presence of delays.

In this paper, we address infinite-horizon impulse control with execution delays in discrete time. Specifically, we study a class of impulse control problems in which decisions are implemented after a finite delay $\dl$, and the system evolves in discrete time. We do not require a specific sign for the running payoff function $g$ and the impulse payoff function $\Psi$. When $\Psi(\xi)$ is negative, it means that the impulse $\xi$ provides a subsidy rather than being a cost to the decision maker. On the other hand, between two successive impulses, the dynamics of the controlled system is no longer Markovian, contrary to the setting in \cite{bensoussan08}. Our analysis uses probabilistic tools, particularly the Snell envelope of processes, to establish the existence of optimal strategies under both risk-neutral and risk-sensitive utility functions. Furthermore, we show the existence of bounded $\varepsilon$-optimal strategies, providing a robust framework for practical applications.

The remainder of this paper is organized as follows. Section 2 introduces the problem formulation, including the dynamics of the system and admissible strategies. Section 3 presents our main results, including the existence of optimal and bounded $\varepsilon$-optimal strategies. Section 4 discusses the risk-sensitive case and highlights key differences from the risk-neutral case. Section 5 provides concluding remarks.

\section{The model}
\def \bbn {\overline{\mathbb{N}}}
\def \bn {\mathbb{N}}
Let $(\om,\Fc,\P)$ be a probability space and $\F=(\Fc_n)_{n\ge 1}$ a filtration of $(\om,\Fc)$. Let $\dl$ be a positive integer and $\U$ be a finite subset of $\R^d$. 
Denote by $\Fc_\infty:=\sigma\{\Fc_n,n\ge 0\}$ the $\sigma$-algebra generated by $\Fc_n$, $n\ge 0$, and for any $\F$-stopping time $\t$, set $\Fc_\t:=\{A\in \F_\infty, A\cap \{\t=n\}\in \Fc_n, \mbox{ for any }n\ge 0\}$. Let $\overline{\mathbb{N}}:=\mathbb{N}\cup \{+\infty\}$ and $(\theta_n)_{n\ge 0}$ be a stochastic process indexed by $\bn$. We set $\theta^-_\infty:=\limsup_{n\rwi}\theta_n$ and extend $(\theta_n)_{n\ge 0}$ to $\bbn$ by setting $\theta_\infty=\theta^-_\infty$.

Let $(X_n)_{n\ge 0}$ be an $\F$-adapted stochastic process valued in $\R^d$ that describes the evolution of a system when it is not controlled. An impulse strategy with delay $\dl$ is a sequence $\d:=(\t_p,\z_p)_{p\ge 0}$, where for any $p\ge 0$, 
\begin{itemize}
\item[(a)]  $\t_p$ is an $\F$-stopping time such that $\t_{p+1}-\t_p\ge \dl$, $\,\,\P$-a.s.;

\item[(b)] $\z_p$ is a $\U$-valued and $\F_{\t_p}$-measurable random variable.
\end{itemize}
\nd In this definition, $\t_p$ is the time when the controller of the system decides to make an impulse of magnitude $\z_p$. The sequence $\delta = (\tau_p, \xi_p)_{p \geq 0}$ is called an admissible strategy of the impulse control problem and the set of admissible strategies is denoted by $\mathcal{A}$.

When the controller implements an impulse strategy  $\d:=(\t_p,\z_p)_{p\ge 0}$, the evolution of the controlled system is given by the process $X^\d:=(X^\d_k)_{k\ge 0}$ defined by
\begin{equation*}
X^\d_k=\begin{cases} X_k & \mx{ if }k<\t_0+\dl, \\
X_k+\z_0+\cdots+\z_\ell & \mbox{ if }\t_\ell+\dl \le k<\t_{\ell+1}+\dl,
\end{cases}
\end{equation*} or in a compact form,
\begin{equation}\label{X-delta}
X^\d_k=X_k 1_{\{k<\t_0+\dl\}}+\sum_{\ell \ge 0}(X_k+\z_0+\dots+ \z_\ell) 1_{\{\t_\ell+\dl \le k<\t_{\ell+1}+\dl\}},\quad k\ge 0.
\end{equation}
\section{The risk-neutral impulse control problem}\lb{rn-case}
\def \bn {\bar{\mathbb{N}}}
\def \nb {\mathbb{N}}
The payoff which is a reward for the controller is given by the following risk-neutral utility function:
\be\lb{rn}
J(\d):=\E[\sum_{k\ge 0}e^{-\th k}g(X^\d_k)-\sum_{\ell \ge 0}e^{-\th (\ell+\dl)}\Psi(\z_{\ell})],
\ee
where 
\begin{itemize}
    \item[(i)] $\th$, standing for the discount factor, is a positive constant, 
    \item[(ii)] $g$ and $\Psi$ are bounded functions. They stand respectively for the running payoff and the accumulated payoff due to impulses. 
\end{itemize}
The controller's problem is to find an optimal strategy $\d^*$, that is, which satisfies
$$
J(\d^*)=\sup_{\d \in \ca}J(\d).
$$
\begin{Remark}
We do not require a specific sign for $g(\cdot)$ and more importantly for $\Psi(\cdot)$ because we allow for a negative impulse payoff (cost), which means that an impulse induces a reward instead of incurring a cost. In some way, in the latter situation, the controller gets subsidies.  \qed
\end{Remark}

\subsection{Iterative scheme}
Let $\nu$ be  an $\F$- stopping time and $\xi$ be a $\U$-valued and $\Fc_\n$-measurable random variable. For any  $n\ge 0$ and any pair 
$(\nu,\xi)$, let $(Y^n_k(\nu,\xi))_{k\in \nb}$ be the sequence of processes defined recursively as follows. For any $k\in \nb$,
\begin{equation}\lb{Y-0-k}
Y^0_k(\nu,\xi)=\E[\sum_{\ell \ge k}e^{-\th \ell}g(X_\ell+\xi)1_{\{\ell \ge \n\}}|\Fc_k].
\end{equation}
Since $g(\cdot)$ is bounded,
$(Y^0_k(\nu,\xi))_{k\in \nb}$ is well-posed. Moreover, 
$Y^{0,-}_\infty(\nu,\xi)=\underset{k\rwf}{\lim}\,Y^0_k(\nu,\xi)=0$, $\P$-a.s., and in $L^p(\P)$, for any $p\in [1,\infty)$, since for any $k\in \nb$, 
\be \lb{estyo}|Y^0_k(\nu,\xi)|\le (1-e^{-\theta})^{-1}e^{-\theta k}\|g\|_{\infty},
\ee
where $\|g\|_{\infty}=\underset{x\in \R^d}{\sup}|g(x)|$. Then, we set $Y^{0}_{\infty}(\nu,\xi)=0$, which makes the sequence $(Y^n_k(\nu,\xi))_{k\in \bbn}$ well-posed. 
Next, for $n\ge 1$ and any pair $\nx$, we set 
\begin{equation}\label{obst1} \begin{array}{lll}
\co^n_k(\nu,\xi):=\left\{\begin{array}{l}
\E[\underset{k+1\le \ell < k+\dl}{\sum}e^{-\th \ell}g(X_\ell+\xi)1_{\{\ell \ge \n\}}|\Fc_k] \\ \qquad\qquad\quad+\underset{\beta \in \U}{\max}\,\E[-e^{-\th (k+\dl)}\Psi(\beta)+Y^{n-1}_{k+\dl}(\n,\xi+\beta)|\Fc_k]\,\, \mbox{ if }k\in \nb,\\
0 \,\,\mbox{ if }k=+\infty,
\end{array}\right.
\end{array}
\end{equation}
and for any $k\in \bbn$, 
\be \lb{eq6-n-k}
Y_k^{n}(\nu,\xi):=\esssup_{\t\ge k}
\E[\sum_{k\le 
%\textcolor{blue}{(<??)}
\ell\le\t}e^{-\th \ell}g(X_\ell+\xi)1_{\{\ell \ge \n\}}+\co^{n}_\t(\nu,\xi)|\Fc_k],
\ee
where the essential supremum is taken over $\F$-stopping times $\t\ge k$. Note that according to this definition $Y_\infty^{n}(\nu,\xi)=\co^n_\infty(\nu,\xi)=0.$
\medskip

In the next proposition we collect some properties of the sequence $(Y_k^{n}(\nu,\xi))_{k\in \bbn}$, $n\ge 0$.
\def \bbn{\overline{\nb}}
\begin{proposition}\label{P0} For any $n\ge 0$ and any pair $(\n,\xi)$, the processes $(Y_k^{n}(\nu,\xi))_{k\in \bbn}$ are well-posed and 
$Y_\infty^{n,-}(\nu,\xi)=\lim_{k \rwf}Y_k^{n}(\nu,\xi)=Y_\infty^{n}(\nu,\xi)=0.$
\end{proposition}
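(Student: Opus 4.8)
The plan is to prove the statement by induction on $n$, establishing in fact a slightly stronger, quantitative claim: for every $n\ge 0$ there is a constant $C_n$ depending only on $n,\th,\dl,\|g\|_\infty,\|\Psi\|_\infty$ (crucially \emph{not} on the pair $(\nu,\xi)$) such that $(Y^n_k(\nu,\xi))_{k\in\bbn}$ is well-posed and $|Y^n_k(\nu,\xi)|\le C_n e^{-\th k}$ for all $k\in\bbn$, with the convention $e^{-\th\infty}=0$. Once this is available, the conclusions of the proposition are immediate: $\lim_{k\rwf}Y^n_k(\nu,\xi)=0$ $\P$-a.s.\ and, by dominated convergence, also in every $L^p(\P)$, $p\in[1,\infty)$, while $Y^n_\infty(\nu,\xi)=\co^n_\infty(\nu,\xi)=0$ by the very definition \eqref{eq6-n-k}, so that $Y^{n,-}_\infty(\nu,\xi)=Y^n_\infty(\nu,\xi)=0$.

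The base case $n=0$ is exactly what is recorded just before the statement: estimate \eqref{estyo} lets one take $C_0=(1-e^{-\th})^{-1}\|g\|_\infty$, and $Y^0_\infty(\nu,\xi)=0$ by construction. For the induction step I would assume the claim for $n-1$ and first bound the obstacle $\co^n_k(\nu,\xi)$ of \eqref{obst1}. For $k\in\nb$, the finite sum is dominated in absolute value by $\|g\|_\infty\sum_{\ell\ge k+1}e^{-\th\ell}=(1-e^{-\th})^{-1}\|g\|_\infty e^{-\th(k+1)}$; and, for each $\b\in\U$, $|Y^{n-1}_{k+\dl}(\n,\xi+\b)-e^{-\th(k+\dl)}\Psi(\b)|\le(\|\Psi\|_\infty+C_{n-1})e^{-\th(k+\dl)}$ by the induction hypothesis applied to $Y^{n-1}$ at time $k+\dl$ with impulse $\xi+\b$. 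Since $\U$ is finite, the maximum over $\b$ is again $\Fc_k$-measurable and obeys the same bound, so there is a constant $a_n$ with $|\co^n_k(\nu,\xi)|\le a_n e^{-\th k}$ for every $k\in\bbn$; in particular $\co^n_k(\nu,\xi)\in L^1(\P)$.

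Next, for fixed $k\in\nb$ and any $\F$-stopping time $\t\ge k$, on $\{\t=m\}$ (and trivially on $\{\t=\infty\}$) one has
$$
\Big|\sum_{k\le\ell\le\t}e^{-\th\ell}g(X_\ell+\xi)1_{\{\ell\ge\n\}}+\co^n_\t(\nu,\xi)\Big|
\le\|g\|_\infty\sum_{\ell\ge k}e^{-\th\ell}+a_n e^{-\th m}
\le C_n e^{-\th k},\qquad C_n:=(1-e^{-\th})^{-1}\|g\|_\infty+a_n .
$$
Taking $\E[\cdot\mid\Fc_k]$ and then the essential supremum over $\t\ge k$ — an essential supremum of a family of integrable random variables dominated by the deterministic quantity $C_n e^{-\th k}$ — shows that $Y^n_k(\nu,\xi)$ is well-defined, integrable, and satisfies $|Y^n_k(\nu,\xi)|\le C_n e^{-\th k}$; for $k=\infty$ this is the identity $Y^n_\infty(\nu,\xi)=0$. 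This closes the induction, and with the first paragraph it proves the proposition.

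The only point that needs care, and the reason for phrasing the induction with a constant uniform in $(\nu,\xi)$, is that the step for $n$ invokes the hypothesis for $n-1$ at the \emph{shifted} time $k+\dl$ and \emph{shifted} impulse $\xi+\b$; a bound depending on $(\nu,\xi)$ would not propagate through the recursion \eqref{obst1}--\eqref{eq6-n-k}. Everything else is a routine combination of the boundedness of $g$ and $\Psi$, the finiteness of $\U$, the geometric summability of $(e^{-\th\ell})_{\ell\ge 0}$, and standard facts about conditional expectations and essential suprema of dominated families.
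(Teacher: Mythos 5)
Your proposal is correct and follows essentially the same route as the paper: an induction on $n$ establishing a bound of the form $|Y^n_k(\nu,\xi)|\le C_n e^{-\th k}$ uniformly in $(\nu,\xi)$, obtained by first bounding the obstacle $\co^n_k(\nu,\xi)$ via the induction hypothesis at time $k+\dl$ and impulse $\xi+\b$, and then dominating the family over which the essential supremum is taken. The paper merely makes the constants explicit ($C_n=(2n+1)(1-e^{-\th})^{-1}\|g\|_\infty+n\|\Psi\|_\infty$), and your remark that the bound must be uniform in $(\nu,\xi)$ for the recursion to close is exactly the point implicit in its argument.
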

\begin{proof} We proceed by an induction argument on $n$, to show that for any $n\ge 0$ and any pair $\nx$, the process $(Y_k^{n}(\nu,\xi))_{k\in \bbn}$ exists and satisfies, for any $k\in \nb$ and any pair $\nx$, 
\be \lb{proprec}
|Y^n_k\nx|\le ((2n+1)\ct \|g\|_\infty+n|\Psi|_{\infty})e^{-\theta k}.
\ee where $\ct:=(1-e^{-\theta})^{-1}$.

For $n=0$, the property holds true since 
$(Y_k^{0}(\nu,\xi))_{k\in \bbn}$ is well-posed and by \eqref{estyo}, the estimate \eqref{proprec} is satisfied. 
Assume now that for some $n\ge 0$ and for any $\nx$, the process $(Y_k^{n}(\nu,\xi))_{k\in \bbn}$ is well-posed and \eqref{proprec} is satisfied. By definition, the process $(\co_k^{n+1}(\nu,\xi))_{k\in \nb}$ is given by
\begin{equation}\label{obst} \begin{array}{lll}
 \co^{n+1}_k(\nu,\xi):=
\E[\underset{k+1\le \ell < k+\dl}{\sum}e^{-\th \ell}g(X_\ell+\xi)1_{\{\ell \ge \n\}}|\Fc_k] \\ \qquad\qquad\qquad\qquad\qquad\qquad+\underset{\beta \in \U}{\max}\,\E[-e^{-\th (k+\dl)}\Psi(\beta)+Y^{n}_{k+\dl}(\n,\xi+\beta)|\Fc_k], \quad k\in \nb.
\end{array}
\end{equation}
The boundedness of $g(\cdot)$, $\Psi(\cdot)$ and the inequality \eqref{proprec} imply that, for any $k\in \nb$, $\co^{n+1}_k(\nu,\xi)$ is well-posed. Thus, the process 
$(\co^{n+1}_k(\nu,\xi))_{k\in \bbn}$ is also well-posed since $\co^{n+1}_\infty(\nu,\xi)=0$. Moreover, for any $k\in \nb$, 
$$
|\co^{n+1}_k(\nu,\xi)|\le ((2n+2)\ct \|g\|_\infty+(n+1)|\Psi|_{\infty})e^{-\theta k}.$$
So, the process $(Y_k^{n+1}(\nu,\xi))_{k\in \bbn}$ is well-posed and satisfies
$$
|Y^{n+1}_k\nx|\le ((2n+3)\ct \|g\|_\infty+(n+1)|\Psi|_{\infty})e^{-\theta k}, k\in \nb. 
$$
Consequently, the property is valid for $n+1$ and hence it is valid for every $n\ge 0$. 

Finally, the fact that $Y_\infty^{n,-}(\nu,\xi)=\lim_{k \rwf}Y_k^{n}(\nu,\xi)=Y_\infty^{n}(\nu,\xi)=0$ follows immediately from \eqref{proprec}.
\end{proof}
\begin{remark}\lb{domon} As a by-product, we obtain that, for any $n\ge 0$ and any $\nx$
\be \lb{estokn}
|\co^{n}_k(\nu,\xi)|\le \left(2n\ct \|g\|_\infty +n\|\Psi\|_{\infty}\right)e^{-\theta k},\quad k\ge 0,
\ee 
where 
$\|\Psi\|_{\infty}=\sup_{u\in \U}|\Psi(u)|$. Therefore, $\co^{n,-}_\infty(\nu,\xi)=0$, for any $n\ge 1$ and any pair $\nx$. \qed
\end{remark}

For every $n\ge 1$ and every $(\nu,\xi)$, set
\be\lb{Y-tilde-n}
\widetilde{Y}_k^n(\nu,\xi):=Y^n_k(\nu,\xi)+\sum_{\ell \le k-1}e^{-\th \ell}g(X_\ell+\xi)1_{\{\ell \ge \n\}},\,\quad k\ge 0.
\ee
In view of \eqref{eq6-n-k}, we have
\be \lb{eq6-tilde-n}
\widetilde{Y}_k^{n}(\nu,\xi)=\esssup_{\t\ge k}
\E[\sum_{\ell\le\t}e^{-\th \ell}g(X_\ell+\xi)1_{\{\ell \ge \n\}}+\co^{n}_\t(\nu,\xi)|\Fc_k],\quad k\ge 0,
\ee
which means that it is the Snell envelope (see e.g. \cite{Dellacherie}, p.431 or \cite{El-Karoui1}, p.140) of the obstacle processes 
\begin{equation}\lb{L-k-n}
L^n_k\nx:=\sum_{\ell\le k}e^{-\th \ell}g(X_\ell+\xi)1_{\{\ell \ge \n\}}+\co^{n}_k(\nu,\xi),\quad k\ge 0, \quad L^n_\infty\nx=\underset{k\to\infty}{\lim}\, L^n_k\nx,
\end{equation} 
i.e., $(\widetilde{Y}_k^n(\nu,\xi))_{k\ge 0}$  is the smallest supermartingale which dominates $(L_k\nx)_{k\ge 0}$. Thus, $(\widetilde{Y}_k^n(\nu,\xi))_{k\ge 0}$ is the unique process that satisfies the following recursive formula:
\be \lb{eq2-tilde-n}\left\{\begin{array}{l}
\widetilde{Y}^n_k(\nu,\xi)=\max\{\E[\widetilde{Y}^n_{k+1}(\nu,\xi)|\Fc_k], \underbrace{\sum_{\ell\le k}e^{-\th \ell}g(X_\ell+\xi)1_{\{\ell \ge \n\}}+\co^n_k(\nu,\xi)}_{L^n_k\nx}\},\quad  k\ge 0;
\\
\widetilde{Y}^n_\infty(\nu,\xi)=L^n_\infty\nx=
\sum_{\ell\ge 0}e^{-\th \ell}g(X_\ell+\xi)1_{\{\ell \ge \n\}}.
\end{array}\right.
\ee
Note that the r.v. $L^n_\infty\nx$ exists since the function $g(\cdot)$ is bounded and $\lim_{k\rw \infty}\co^{n}_k(\nu,\xi)=0.$
%\section{Iterative scheme}
\begin{proposition}[Monotonicity property]\label{P1}
For any $n\ge0$ and any pair $(\nu,\xi)$, 
\begin{equation} \label{monotonicity}
Y^n_k(\nu,\xi)\le Y^{n+1}_k(\nu,\xi),\quad k\ge 0.
\end{equation}
\end{proposition}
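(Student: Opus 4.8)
The statement to prove is the monotonicity $Y^n_k(\nu,\xi)\le Y^{n+1}_k(\nu,\xi)$ for all $n\ge 0$, $k\ge 0$. The plan is to argue by induction on $n$, using the Snell-envelope (or equivalently the dynamic-programming) characterization in \eqref{eq6-n-k}, and the observation that the only place $n$ enters the right-hand side of \eqref{eq6-n-k} is through the obstacle $\co^n_\t(\nu,\xi)$, which in turn depends on $n$ only through the term $Y^{n-1}_{k+\dl}(\nu,\xi+\beta)$. So monotonicity of $Y^n$ in $n$ should propagate from monotonicity of $\co^n$ in $n$, which should propagate back from monotonicity of $Y^{n-1}$ in $n$.

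\medskip

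First I would establish the base case $n=0$: show $Y^0_k(\nu,\xi)\le Y^1_k(\nu,\xi)$. By \eqref{eq6-n-k}, $Y^1_k(\nu,\xi)$ is an essential supremum over stopping times $\t\ge k$; it suffices to exhibit one admissible choice of $\t$ whose conditional expectation already dominates $Y^0_k(\nu,\xi)$. The natural candidate is $\t=+\infty$. With $\t=+\infty$ we have $\co^1_\infty(\nu,\xi)=0$ and $\sum_{k\le \ell\le +\infty}e^{-\th\ell}g(X_\ell+\xi)1_{\{\ell\ge\nu\}}=\sum_{\ell\ge k}e^{-\th\ell}g(X_\ell+\xi)1_{\{\ell\ge\nu\}}$, so that
\[
\E\Big[\sum_{k\le\ell\le+\infty}e^{-\th\ell}g(X_\ell+\xi)1_{\{\ell\ge\nu\}}+\co^1_\infty(\nu,\xi)\,\big|\,\Fc_k\Big]
=\E\Big[\sum_{\ell\ge k}e^{-\th\ell}g(X_\ell+\xi)1_{\{\ell\ge\nu\}}\,\big|\,\Fc_k\Big]=Y^0_k(\nu,\xi),
\]
using \eqref{Y-0-k}; here I should check that the interchange of the sum and the conditional expectation is legitimate, which follows from the uniform bound \eqref{estyo} and dominated convergence. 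Hence $Y^0_k(\nu,\xi)\le Y^1_k(\nu,\xi)$.

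\medskip

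For the inductive step, suppose $Y^{n-1}_k(\nu,\xi)\le Y^n_k(\nu,\xi)$ for all $k\ge 0$ and all pairs $(\nu,\xi)$. Comparing \eqref{obst} (equivalently \eqref{obst1}) for $\co^n$ and $\co^{n+1}$: the first term $\E[\sum_{k+1\le\ell<k+\dl}e^{-\th\ell}g(X_\ell+\xi)1_{\{\ell\ge\nu\}}\,|\,\Fc_k]$ is identical for both, while in the second term the induction hypothesis applied with the pair $(\nu,\xi+\beta)$ at time $k+\dl$ gives $Y^{n-1}_{k+\dl}(\nu,\xi+\beta)\le Y^n_{k+\dl}(\nu,\xi+\beta)$; since $\beta\mapsto\E[-e^{-\th(k+\dl)}\Psi(\beta)+Y^{n-1}_{k+\dl}(\nu,\xi+\beta)\,|\,\Fc_k]$ is pointwise dominated, taking $\max_{\beta\in\U}$ preserves the inequality, so $\co^n_k(\nu,\xi)\le\co^{n+1}_k(\nu,\xi)$ for every $k\in\nb$ (and trivially at $k=+\infty$, both being $0$). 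Now return to \eqref{eq6-n-k}: for each fixed stopping time $\t\ge k$, the integrand for level $n+1$ dominates that for level $n$ pointwise (the running-payoff sum is the same, only $\co^n_\t\le\co^{n+1}_\t$ changed), hence so do the conditional expectations, and therefore the essential suprema satisfy $Y^n_k(\nu,\xi)\le Y^{n+1}_k(\nu,\xi)$. This closes the induction.

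\medskip

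\textbf{Main obstacle.} I expect the only genuine subtlety — rather than the monotonicity bookkeeping, which is straightforward — to be the handling of the stopping time value $\t=+\infty$ in the base case and of the tail terms in general: one must be sure that $\co^n_\t(\nu,\xi)$ and the running sums are well-defined on $\{\t=+\infty\}$, that the essential supremum in \eqref{eq6-n-k} legitimately includes such $\t$, and that all conditional expectations commute with the infinite sums. These points are all controlled by the uniform exponential bounds \eqref{estyo}, \eqref{proprec}, \eqref{estokn} established in Proposition \ref{P0} and Remark \ref{domon}, together with dominated convergence, so the argument goes through cleanly; there is no deeper difficulty.
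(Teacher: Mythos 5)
Your proof is correct and follows essentially the same induction as the paper: monotonicity of the obstacles $\co^n$ propagates from the induction hypothesis, and the essential supremum preserves the pointwise domination. The only (immaterial) difference is in the base case, where you plug $\t=+\infty$ directly into the essential supremum defining $Y^1_k$, whereas the paper takes deterministic times $m\ge k$ and lets $m\to\infty$ using Remark \ref{domon} and conditional dominated convergence — both yield $Y^1_k(\nu,\xi)\ge Y^0_k(\nu,\xi)$.
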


\nd\begin{proof} We proceed by induction on $n$. Indeed, by \eqref{eq6-n-k}, for any $m\ge k$, we have
$$
Y^1_k(\nu,\xi)\ge \E[\sum_{k\le \ell \le m}e^{-\th \ell}g(X_\ell+\xi)1_{\{\ell \ge \n\}}+\co^{1}_m(\nu,\xi)|\Fc_k].
$$
Therefore, in view of Remark \ref{domon}, the boundedness of $g(\cdot)$ and conditional dominated convergence, it holds that, for any pair $(\nu,\xi)$, 
\begin{align*}
Y^1_k(\nu,\xi)&\ge  \underset{m\to \infty}{\lim}\,\E[\underset{k\le \ell \le m}{\sum}\,e^{-\th \ell}g(X_\ell+\xi)1_{\{\ell \ge \n\}}+\co^{1}_m(\nu,\xi)|\Fc_k] \\ & =\E[\underset{\ell \ge k}{\sum}\,e^{-\th \ell}g(X_\ell+\xi)1_{\{\ell \ge \n\}}|\Fc_k]=Y^0_k(\nu,\xi).
\end{align*}
Thus, the property holds for $n=0$. Now, suppose that for some $n$ and for any $\nx$ and any $k\ge 0$, $Y^n_k(\nu,\xi)\le Y^{n+1}_k(\nu,\xi)$. Then, by \eqref{obst},
it follows that $\co^{n+1}_k(\nu,\xi)\le \co^{n+2}_k(\nu,\xi)$ which implies that $Y^{n+1}_k(\nu,\xi)\le Y^{n+2}_k(\nu,\xi)$. Thus, the property holds for any $n\ge 0$, which means that $(Y^n\nx)_{n\ge 0}$ is non-decreasing in $n$.
\end{proof}

Next, we provide a characterization of the processes $Y^{n}(\nu,\xi)$ and state some of their properties. 

\begin{proposition}[Consistency property]\label{cs}
Let $\nu$ be a stopping time and $\xi$ be a $\U$-valued and $\Fc_{\nu}$-measurable r.v. Then, for any $\F$-stopping time $\nu'\ge \nu$, $\P$-a.s, it holds that, for any $n\ge 0$, 
$$Y^{n}_k(\nu,\xi)=Y^{n}_k(\nu',\xi), \quad \,\,k\ge \nu'.$$
\end{proposition}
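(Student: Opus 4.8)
The plan is to argue by induction on $n$, at each step first establishing the identity for the obstacle processes $\co^n$ and then lifting it to $Y^n$ via the Snell envelope characterization \eqref{eq6-n-k}. Since $\nu\le\nu'$ we have $\Fc_\nu\subseteq\Fc_{\nu'}$, so $\xi$ is $\Fc_{\nu'}$-measurable and all the quantities $Y^n_\cdot(\nu',\xi)$ and $\co^n_\cdot(\nu',\xi)$ are well defined; also $1_{\{k\ge\nu'\}}$ is $\Fc_k$-measurable because $\nu'$ is an $\F$-stopping time. I read the claim as the a.s.\ identities $1_{\{k\ge\nu'\}}Y^n_k(\nu,\xi)=1_{\{k\ge\nu'\}}Y^n_k(\nu',\xi)$, $k\ge0$. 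Everything rests on one elementary observation: on $\{k\ge\nu'\}$ one has $1_{\{\ell\ge\nu\}}=1_{\{\ell\ge\nu'\}}=1$ for every $\ell\ge k$ (since $\ell\ge k\ge\nu'\ge\nu$), together with the fact that a non-negative $\Fc_k$-measurable factor commutes with $\E[\cdot\,|\Fc_k]$, with $\max_{\beta\in\U}$, and with $\esssup_{\t\ge k}$ (localization of the essential supremum).

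For $n=0$, one multiplies \eqref{Y-0-k} by $1_{\{k\ge\nu'\}}$, pushes it inside the conditional expectation, and replaces $1_{\{\ell\ge\nu\}}$ by $1_{\{\ell\ge\nu'\}}$ for each $\ell\ge k$; this gives the claim at $n=0$. Now assume it holds at step $n$. The first task is to show $1_{\{k\ge\nu'\}}\co^{n+1}_k(\nu,\xi)=1_{\{k\ge\nu'\}}\co^{n+1}_k(\nu',\xi)$ for every $k\ge0$ (the value $k=+\infty$ being trivial). Using \eqref{obst}, the finite sum $\sum_{k+1\le\ell<k+\dl}$ is handled exactly as at $n=0$; for the term $\max_{\beta\in\U}\E[-e^{-\th(k+\dl)}\Psi(\beta)+Y^n_{k+\dl}(\nu,\xi+\beta)\,|\Fc_k]$, I push $1_{\{k\ge\nu'\}}$ through the $\max$ and the conditional expectation, so it is enough to check, for each fixed $\beta\in\U$, that $1_{\{k\ge\nu'\}}Y^n_{k+\dl}(\nu,\xi+\beta)=1_{\{k\ge\nu'\}}Y^n_{k+\dl}(\nu',\xi+\beta)$; but $\xi+\beta$ is $\Fc_\nu$-measurable and $\{k\ge\nu'\}\subseteq\{k+\dl\ge\nu'\}$ (as $\dl\ge1$), so this is precisely the induction hypothesis at the integer $k+\dl$.

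It remains to deduce the step-$(n+1)$ identity for $Y^{n+1}$ from the one for $\co^{n+1}$. Multiplying \eqref{eq6-n-k} (with $n$ replaced by $n+1$) by $1_{\{k\ge\nu'\}}$ and bringing it inside $\esssup_{\t\ge k}$ and $\E[\cdot\,|\Fc_k]$ yields
\[
1_{\{k\ge\nu'\}}Y^{n+1}_k(\nu,\xi)=\esssup_{\t\ge k}\E\Big[1_{\{k\ge\nu'\}}\Big(\sum_{k\le\ell\le\t}e^{-\th\ell}g(X_\ell+\xi)1_{\{\ell\ge\nu\}}+\co^{n+1}_\t(\nu,\xi)\Big)\,\Big|\,\Fc_k\Big].
\]
On $\{k\ge\nu'\}$ every $\ell$ with $k\le\ell\le\t$ satisfies $1_{\{\ell\ge\nu\}}=1_{\{\ell\ge\nu'\}}$, and on $\{k\ge\nu'\}\cap\{\t=j\}$ one has $j=\t\ge k\ge\nu'$, so $\co^{n+1}_j(\nu,\xi)=\co^{n+1}_j(\nu',\xi)$ there by the previous paragraph; hence $1_{\{k\ge\nu'\}}\co^{n+1}_\t(\nu,\xi)=1_{\{k\ge\nu'\}}\co^{n+1}_\t(\nu',\xi)$ for every $\F$-stopping time $\t\ge k$. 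Thus the integrand is unchanged when $\nu$ is replaced by $\nu'$, which gives $1_{\{k\ge\nu'\}}Y^{n+1}_k(\nu,\xi)=1_{\{k\ge\nu'\}}Y^{n+1}_k(\nu',\xi)$ and closes the induction.

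The proof is essentially bookkeeping, and the one point that needs care --- the main obstacle --- is the legitimacy of commuting the indicator $1_{\{k\ge\nu'\}}$ with $\max_{\beta\in\U}$, with $\esssup_{\t\ge k}$, and with the evaluation $\co^n_\t$ of an obstacle at a random time $\t\ge k$; in each case this reduces to the compatibility of multiplication by a non-negative $\Fc_k$-measurable random variable with the operation in question.
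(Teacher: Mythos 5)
Your proof is correct and follows essentially the same route as the paper: induction on $n$, with the base case coming from the identity $1_{\{\ell\ge\nu\}}=1_{\{\ell\ge\nu'\}}$ for $\ell\ge k\ge\nu'$, and the induction step passing first through the obstacle $\co^{n+1}$ (using that $\xi+\beta$ is $\Fc_\nu$-measurable and $k+\dl\ge\nu'$) and then through the $\esssup$ representation \eqref{eq6-n-k}. The only difference is cosmetic: you localize by multiplying with the $\Fc_k$-measurable indicator $1_{\{k\ge\nu'\}}$, whereas the paper evaluates the processes at $k\vee\nu'$; these are equivalent, and your explicit justification of commuting the indicator with $\max_{\beta\in\U}$, $\E[\cdot\,|\Fc_k]$ and $\esssup_{\t\ge k}$ is the same localization the paper uses implicitly.
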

\nd \begin{proof} We proceed by induction on $n$. For any $\nx $, 
$$
Y^0_{k\vee \nu'}(\nu,\xi)=\E[\sum_{\ell \ge {k\vee \nu'}}e^{-\th \ell}g(X_\ell+\xi)1_{\{\ell \ge \n\}}|\Fc_{{k\vee \nu'}}],\quad k\ge 0.
$$
But, since $\nu'\ge \nu$ $\P$-a.s., we must have 
$$
\E[\sum_{\ell \ge {k\vee \nu'}}e^{-\th \ell}g(X_\ell+\xi)1_{\{\ell \ge \n\}}|\Fc_{{k\vee \nu'}}]=\E[\sum_{\ell \ge {k\vee \nu'}}e^{-\th \ell}g(X_\ell+\xi)1_{\{\ell \ge \n'\}}|\Fc_{{k\vee \nu'}}]=Y^0_{k\vee \nu'}(\nu',\xi).
$$
Thus, the property is valid for $n=0$. Assume now that it is valid for some $n\ge 0$, i.e., for any pair $(\nu,\xi)$ and any $k\ge 0$, $Y^{n}_{k\vee \nu'}(\nu,\xi)=Y^{n}_{k\vee \nu'}(\nu',\xi)$. We have
\begin{equation}\begin{array}{ll}
\co^{n+1}_{k\vee \nu'}(\nu,\xi)&=
\E[\underset{{k\vee \nu'}+1\le \ell < {k\vee \nu'}+\dl}{\sum}\,e^{-\th \ell}g(X_\ell+\xi)1_{\{\ell \ge \n\}}|\Fc_{k\vee \nu'}]\\& \qquad \qquad +\underset{\beta \in \U}{\max}\,\E[-e^{-\th ({k\vee \nu'}+\dl)}\Psi(\beta)+Y^{n}_{{k\vee \nu'}+\dl}(\n,\xi+\beta)|\Fc_{k\vee \nu'}] \\ \\&=\E[\underset{{k\vee \nu'}+1\le \ell < {k\vee \nu'}+\dl}{\sum}\,e^{-\th \ell}g(X_\ell+\xi)1_{\{\ell \ge \n'\}}|\Fc_{k\vee \nu'}]\\\\& \qquad \qquad +\underset{\beta \in \U}{\max}\,\E[-e^{-\th ({k\vee \nu'}+\dl)}\Psi(\beta)+Y^{n}_{{k\vee \nu'}+\dl}(\n',\xi+\beta)|\Fc_{k\vee \nu'}]\\
&=\co^{n+1}_{k\vee \nu'}(\nu',\xi).
\end{array}
\end{equation}
The second equality is valid by the induction assumption, the fact that $\beta$ is constant and so $\xi+\beta$ is $\Fc_{\nu}$-measurable, and $\nu'\ge \nu$. But, by \eqref{eq6-n-k}, 
we have, for any $k\ge 0$, 
\be \lb {eq7xx}\begin{array}{ll}
Y_{k\vee \nu'}^{n+1}(\nu,\xi)&=\underset{\t\ge {k\vee \nu'}}{\esssup}\,
\E[\sum_{{k\vee \nu'}\le \ell\le\t}e^{-\th \ell}g(X_\ell+\xi)1_{\{\ell \ge \n\}}+\co^{n+1}_\t(\nu,\xi)|\Fc_{k\vee \nu'}]\\\\
&=\underset{\t\ge {k\vee \nu'}}{\esssup}\,
\E[\sum_{{k\vee \nu'}\le \ell\le\t}e^{-\th \ell}g(X_\ell+\xi)1_{\{\ell \ge \n'\}}+\co^{n+1}_\t(\nu',\xi)|\Fc_{k\vee \nu'}]
\\\\
&=Y_{k\vee \nu'}^{n+1}(\nu',\xi),\end{array}
\ee
which implies that the property holds for $n+1$, hence it is valid for every $n\ge 0$.
\end{proof}

For a given  $n\ge 0$, let $\ca_n$ be the set of admissible strategies $\d=(\t_p,\z_p)_{p\ge 0}$ such that $\t_n=+\infty$, $\P$-a.s. and, for $k\ge 0$, let $\ca_n^k$ be the subset of $\ca_n$ of strategies $\d=(\t_p,\z_p)_{p\ge 0}$ such that $\t_0\ge k$. When a strategy $\d$ belonging to $\ca_n$ is implemented, it means that only $n$ impulses, at most, are allowed. 

\ms We have
\begin{proposition}\lb{P2}
For any $n\ge 0$ and any pair $(\nu,\xi)$, 
$$
Y^n_k(\nu,\xi)=\underset{\d \in \ca^k_n}{\esssup}\,\E[\sum_{\ell\ge k}e^{-\th \ell}g(X^\d_\ell+\xi)1_{\{\ell \ge \nu\}}-\sum_{p \ge 0}e^{-\th (\t_p+\dl)}\Psi(\z_{p})|\Fc_k],\quad  k\ge \nu.
$$
\end{proposition}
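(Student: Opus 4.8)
The plan is to prove the asserted identity by induction on $n$, the maximal number of impulses, using the Snell envelope representation \eqref{eq6-n-k}--\eqref{eq2-tilde-n} of $Y^n(\nu,\xi)$ together with the monotonicity (Proposition \ref{P1}), the consistency (Proposition \ref{cs}) and the uniform exponential bounds of Proposition \ref{P0}. Write $V^n_k(\nu,\xi)$ for the right-hand side of the claimed formula. For $n=0$, the set $\ca^k_0$ consists exactly of the strategies with $\t_0=+\infty$; for any such strategy one has $X^\d\equiv X$ and $\sum_{p\ge0}e^{-\th(\t_p+\dl)}\Psi(\z_p)=0$, so $V^0_k(\nu,\xi)=\E[\sum_{\ell\ge k}e^{-\th\ell}g(X_\ell+\xi)1_{\{\ell\ge\nu\}}\mid\Fc_k]=Y^0_k(\nu,\xi)$ by \eqref{Y-0-k}. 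This is the base case.

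Assume the identity holds at level $n$ and fix $k\ge\nu$. To prove $Y^{n+1}_k(\nu,\xi)\ge V^{n+1}_k(\nu,\xi)$, let $\d=(\t_p,\z_p)_{p\ge0}\in\ca^k_{n+1}$. On $\{\t_0=+\infty\}$ the payoff of $\d$ equals $Y^0_k(\nu,\xi)\le Y^{n+1}_k(\nu,\xi)$ by Proposition \ref{P1}. On $\{\t_0<+\infty\}$, consider the shifted strategy $\d':=(\t_{p+1},\z_{p+1})_{p\ge0}$, which belongs to $\ca^{\t_0+\dl}_n$ (by condition (a) and $\t_{n+1}=+\infty$); from \eqref{X-delta} one checks that $X^\d_\ell=X_\ell$ for $\ell\le\t_0+\dl-1$ and $X^\d_\ell+\xi=X^{\d'}_\ell+(\xi+\z_0)$ for $\ell\ge\t_0+\dl$. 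Split the running series in the payoff of $\d$ into the parts over $\{k\le\ell\le\t_0\}$, $\{\t_0<\ell<\t_0+\dl\}$ and $\{\ell\ge\t_0+\dl\}$, and the impulse series into its $p=0$ term and its tail. The tail contributions are, conditionally on $\Fc_{\t_0+\dl}$, the payoff of $\d'$ from configuration $(\t_0+\dl,\xi+\z_0)$, hence, by the induction hypothesis and the consistency property, at most the quantity $Y^n_{\t_0+\dl}(\nu,\xi+\z_0)$ appearing inside $\co^{n+1}_{\t_0}(\nu,\xi)$; conditioning on $\Fc_{\t_0}$ and bounding the $\max_{\b\in\U}$ in \eqref{obst} from below by its value at $\b=\z_0$, the parts over $\{\t_0<\ell<\t_0+\dl\}$, the $p=0$ impulse term and $\E[Y^n_{\t_0+\dl}(\nu,\xi+\z_0)\mid\Fc_{\t_0}]$ together are $\le\co^{n+1}_{\t_0}(\nu,\xi)$. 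Altogether the $\Fc_k$-conditional payoff of $\d$ is at most $\E[\sum_{k\le\ell\le\t_0}e^{-\th\ell}g(X_\ell+\xi)1_{\{\ell\ge\nu\}}+\co^{n+1}_{\t_0}(\nu,\xi)\mid\Fc_k]\le Y^{n+1}_k(\nu,\xi)$ by \eqref{eq6-n-k} (the stopping time being $\t_0\ge k$). Taking the essential supremum over $\d\in\ca^k_{n+1}$ gives $Y^{n+1}_k(\nu,\xi)\ge V^{n+1}_k(\nu,\xi)$.

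For the reverse inequality we construct an almost optimal strategy. By Proposition \ref{P0}, $(\widetilde Y^{n+1}_\ell(\nu,\xi))_{\ell\ge k}$ from \eqref{Y-tilde-n} is a bounded supermartingale which agrees with the obstacle $L^{n+1}(\nu,\xi)$ of \eqref{L-k-n} at $+\infty$, so standard infinite-horizon Snell envelope theory makes the first hitting time $\t^*:=\inf\{\ell\ge k:\widetilde Y^{n+1}_\ell(\nu,\xi)=L^{n+1}_\ell(\nu,\xi)\}$ optimal:
\[
Y^{n+1}_k(\nu,\xi)=\E\Big[\sum_{k\le\ell\le\t^*}e^{-\th\ell}g(X_\ell+\xi)1_{\{\ell\ge\nu\}}+\co^{n+1}_{\t^*}(\nu,\xi)\,\Big|\,\Fc_k\Big].
\]
Since $\U$ is finite there is an $\Fc_{\t^*}$-measurable $\z^*_0\in\U$ attaining the maximum in $\co^{n+1}_{\t^*}(\nu,\xi)$, and, by the induction hypothesis and Proposition \ref{cs}, for every $\eps>0$ there is $\d'\in\ca^{\t^*+\dl}_n$ whose $\Fc_{\t^*+\dl}$-conditional payoff from configuration $(\t^*+\dl,\xi+\z^*_0)$ exceeds $Y^n_{\t^*+\dl}(\nu,\xi+\z^*_0)-\eps$. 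Concatenating ($\t_0:=\t^*$, $\z_0:=\z^*_0$, $(\t_{p+1},\z_{p+1})_{p\ge0}:=\d'$ on $\{\t^*<\infty\}$ and $\t_0:=+\infty$ on $\{\t^*=\infty\}$) yields $\d\in\ca^k_{n+1}$, and the same chain of (in)equalities as in the previous paragraph, now with $\b=\z^*_0$ realizing the maximum in $\co^{n+1}_{\t^*}$ and $\t^*$ optimal, shows that the payoff of $\d$ is at least $Y^{n+1}_k(\nu,\xi)-\eps$. Hence $V^{n+1}_k(\nu,\xi)\ge Y^{n+1}_k(\nu,\xi)-\eps$, and letting $\eps\downarrow0$ completes the induction.

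The main obstacle is this last ``$\le$'' direction. Two points there require care: the optimality of the hitting time $\t^*$ for the infinite-horizon stopping problem defining $Y^{n+1}_k$, which is precisely where the uniform $e^{-\th\ell}$ estimates of Proposition \ref{P0} enter, since they supply the integrability making $\widetilde Y^{n+1}_k=\E[\widetilde Y^{n+1}_{\t^*}\mid\Fc_k]=\E[L^{n+1}_{\t^*}(\nu,\xi)\mid\Fc_k]$; and the passage from the induction hypothesis, which is an essential supremum, to an $\eps$-optimal continuation strategy depending measurably on the random pair $(\t^*+\dl,\xi+\z^*_0)$, which is a routine measurable selection over the countably many values of $\t^*$ and the finitely many values of $\xi+\z^*_0$. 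The ``$\ge$'' direction, by contrast, is a direct pasting argument relying only on Propositions \ref{P0}--\ref{cs}.
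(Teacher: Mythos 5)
Your proof is correct and rests on the same ingredients as the paper's: the Snell-envelope representation \eqref{eq6-n-k} with the first hitting time of the obstacle made optimal by $\co^n_\infty(\nu,\xi)=0$ and the bounds of Proposition \ref{P0}, the maximizer over the finite set $\U$, and the consistency property of Proposition \ref{cs}. The only organizational difference is that the paper unrolls the recursion $n$ times to build an exact optimal strategy in $\ca^k_n$ and then merely asserts the suboptimality of any other $\d$, whereas you run a genuine induction with $\eps$-optimal continuations (paying the small price of the measurable-selection/gluing step you correctly flag) and spell out the pasting argument for the ``$\ge$'' direction, which the paper only carries out explicitly later for the limit process in Theorem \ref{opt-strat-rn}.
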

\nd \begin{proof} First note that for $n=0$, the property is obvious since for any strategy $\d=(\t_p,\z_p)_{p\ge 0}$ such that $\t_0=+\infty$ $\,\P$-a.s., meaning that the controller does not exercise any impulse i.e., $X^\d=X$ for any $\d \in \ca_0$. Therefore, for any $k\ge 0$,
$$
Y^0_k(\nu,\xi)=
\E[\sum_{\ell \ge k}e^{-\th \ell}g(X_\ell+\xi)1_{\{\ell \ge \n\}}|\Fc_k]=\esssup_{\d \in \ca_0^k}\E[\sum_{\ell\ge k}e^{-\th \ell}g(X^\d_\ell+\xi)1_{\{\ell \ge \nu\}}-\sum_{\ell \ge 0}e^{-\th (\t_\ell+\dl)}\Psi(\z_{\ell})|\Fc_k].
$$
Fix $n\ge 1$. For $p=0,\ldots,n-1$, let $(\t^*_p,\b^*_p)$ be the pairs defined as follows (we omit the dependence on $n$ as there is no risk for confusion):
$$
\t^*_0=\inf\{\ell \ge k\vee \nu,\,\, 
Y^n_\ell(\nu,\xi)=\co^n_\ell(\nu,\xi)\}
$$
and $\b^*_0$ be an $\U$-valued and $\Fc_{\t^*_0}$-r.v.  such that
\begin{align*}
\co^{n}_{\t_0^*}(\nu,\xi)& =
\E[\underset{\t^*_0+1\le \ell < \t^*_0+\dl}{\sum}\,e^{-\th \ell}g(X_\ell+\xi)1_{\{\ell \ge \n\}}|\Fc_{\t^*_0}] \\ & \qquad \qquad  +\max_{\beta \in \U}\E[-e^{-\th (\t^*_0+\dl)}\Psi(\beta)+Y^{n-1}_{\t^*_0+\dl}(\n,\xi+\beta)|\Fc_{\t^*_0}]\\
& =\E[\underset{\t^*_0+1\le \ell < \t^*_0+\dl}{\sum}\,e^{-\th \ell}g(X_\ell+\xi)1_{\{\ell \ge \n\}}|\Fc_{\t^*_0}] \\ & \qquad \qquad \qquad  +\max_{\beta \in \U}\E[-e^{-\th (\t^*_0+\dl)}\Psi(\beta)+Y^{n-1}_{\t^*_0+\dl}(\t^*_0,\xi+\beta)|\Fc_{\t^*_0}]\\
& =\E[\underset{\t^*_0+1\le \ell < \t^*_0+\dl}{\sum}\,e^{-\th \ell}g(X_\ell+\xi)|\Fc_{\t^*_0}]+\E[-e^{-\th (\t^*_0+\dl)}\Psi(\beta^*_0)+Y^{n-1}_{\t^*_0+\dl}(\t^*_0,\xi+\beta^*_0)|\Fc_{\t^*_0}].
\end{align*}
Finally, we set $\t_n^*=+\infty$. 
First, note that $\t^*_0$ is optimal for \eqref{eq6-n-k} %\textcolor{blue}{(by the properties of the Snell envelope!)} \textcolor{red}
since, by \eqref{estokn}, 
$\underset{\ell \rwi}{\lim}\co^n_\ell(\nu,\xi)=\co^n_\infty(\nu,\xi)=0$ (see e.g. \cite{hamahassa}, p.184 for more details). On the other hand, the second equality is valid by the consistency condition of
\def \Dl {\Delta}
Proposition \ref{cs}, since $Y^{n-1}_{\t^*_0+\dl}(\t^*_0,\xi+\beta)=Y^{n-1}_{\t^*_0+\dl}(\n,\xi+\beta)$ as $\t_0^*\ge \nu$ and $\xi+\beta$ is $\Fc_\nu$-measurable because $\xi$ is $\Fc_\nu$-measurable and $\beta$ is a constant. Therefore, for any $k\ge \nu$, 
\begin{align}
\lb {eq7}
Y_k^{n}(\nu,\xi)\nonumber& =
\E[\sum_{k\le \ell\le\t_0^*}e^{-\th \ell}g(X_\ell+\xi)1_{\{\ell \ge \n\}}+\co^{n}_{\t_0^*}(\nu,\xi)|\Fc_k]\\
\nonumber&=\E[\sum_{k\le \ell\le\t_0^*}e^{-\th \ell}g(X_\ell+\xi)1_{\{\ell \ge \n\}}+\sum_{\t^*_0+1\le \ell < \t^*_0+\dl}e^{-\th \ell}g(X_\ell+\xi)1_{\{\ell \ge \n\}}\\\nonumber&\qquad \qquad -e^{-\th (\t^*_0+\dl)}\Psi(\beta^*_0)+Y^{n-1}_{\t^*_0+\dl}(\t^*_0,\xi+\beta^*_0)|\Fc_k]\\&=\E[\sum_{k\le \ell<\t_0^*+\dl}e^{-\th \ell}g(X_\ell+\xi)-e^{-\th (\t^*_0+\dl)}\Psi(\beta^*_0)+Y^{n-1}_{\t^*_0+\dl}(\t^*_0,\xi+\beta^*_0)|\Fc_k].
\end{align}
We shall repeat the same argument with $Y^{n-1}_{\t^*_0+\dl}(\t^*_0,\xi+\beta^*_0)$. Let 
$$
\t^*_1=\inf\{\ell \ge \t^*_0+\dl, \,\,
Y^{n-1}_\ell(\t^*_0,\xi+\beta^*_0)=\co^{n-1}_\ell(\t^*_0,\xi+\beta^*_0)\}
$$
and $\b^*_1$ an $\U$-valued and $\Fc_{\t^*_1}$-r.v. such that 
\begin{align*}
\co^{n-1}_{\t_1^*}(\t^*_0,\xi+\beta^*_0)&=
\E[\sum_{\t^*_1+1\le \ell < \t^*_1+\dl}e^{-\th \ell}g(X_\ell+\xi+\beta^*_0)1_{\{\ell \ge \t^*_0\}}|\Fc_{\t^*_1}]\\&\qquad \qquad +\max_{\beta \in \U}\E[-e^{-\th (\t^*_1+\dl)}\Psi(\beta)+Y^{n-2}_{\t^*_1+\dl}(\t^*_0,\xi+\beta^*_0+\b)|\Fc_{\t^*_1}]\\&=
\E[\sum_{\t^*_1+1\le \ell < \t^*_1+\dl}e^{-\th \ell}g(X_\ell+\xi+\beta^*_0)|\Fc_{\t^*_1}]\\&\qquad \qquad +\max_{\beta \in \U}\E[-e^{-\th (\t^*_1+\dl)}\Psi(\beta)+Y^{n-2}_{\t^*_1}(\t^*_1,\xi+\beta^*_0+\b)|\Fc_{\t^*_1}]\\&
=\E[\sum_{\t^*_1+1\le \ell < \t^*_1+\dl}e^{-\th \ell}g(X_\ell+\xi+\beta^*_0)|\Fc_{\t^*_1}]\\&\qquad \qquad +\E[-e^{-\th (\t^*_1+\dl)}\Psi(\beta^*_1)+Y^{n-2}_{\t^*_1+\dl}(\t^*_1,\xi+\beta^*_0+\beta^*_1)|\Fc_{\t^*_1}], \end{align*}
since
\begin{align*}
Y^{n-1}_{\t^*_0+\dl}(\t^*_0,\xi+\beta^*_0)&=
\E[\sum_{\t^*_0+\dl\le \ell\le\t_1^*}e^{-\th \ell}g(X_\ell+\xi+\beta^*_0)+\co^{n-1}_{\t_1^*}(\t^*_0,\xi+\beta^*_0)|\Fc_{\t^*_0+\dl}].
\end{align*}
Note that $\t_1^*$ is optimal after $\t_0^*+\dl$ since $\lim_{k\rw \infty}\co_k^{n-1}(\t_0^*,\xi+\beta_0^*)=\co_\infty^{n-1}(\t_0^*,\xi+\beta_0^*)=0$ (Remark \ref{domon}). 
Now, take the last expression of $\co^{n-1}_{\t_1^*}(\t^*_0,\xi+\beta^*_0)$ and insert it into the previous one of 
$Y^{n-1}_{\t^*_0+\dl}(\t^*_0,\xi+\beta^*_0)$
to obtain
\begin{align}\lb{eqxx}
Y^{n-1}_{\t^*_0+\dl}(\t^*_0,\xi+\beta^*_0)\nonumber&=
\E[\sum_{\t^*_0+\dl\le \ell<\t_1^*+\dl}e^{-\th \ell}g(X_\ell+\xi+\beta^*_0)-e^{-\th (\t^*_1+\dl)}\Psi(\beta^*_1) \\ & \qquad\qquad\qquad\qquad +Y^{n-2}_{\t^*_1+\dl}(\t^*_1,\xi +\beta^*_0+\beta^*_1)|\Fc_{\t^*_0+\dl}].
\end{align}
Next, in the last expression of $Y^n_k(\nu,\xi)$ in \eqref{eq7}, replace $Y^{n-1}_{\t^*_0+\dl}(\t^*_0,\xi+\beta^*_0)$ with the right-hand side of \eqref{eqxx} to obtain, for any $k\ge \nu$, 
\begin{align*}
%\lb {eq7}
Y_k^{n}(\nu,\xi)&=\E[\sum_{k\le \ell<\t_0^*+\dl}e^{-\th \ell}g(X_\ell+\xi)-e^{-\th (\t^*_0+\dl)}\Psi(\beta^*_0)+Y^{n-1}_{\t^*_0+\dl}(\t^*_0,\xi+\beta^*_0)|\Fc_k]\\&
=\E[\sum_{k\le \ell<\t_0^*+\dl}e^{-\th \ell}g(X_\ell+\xi)-e^{-\th (\t^*_0+\dl)}\Psi(\beta^*_0)+\sum_{\t^*_0+\dl\le \ell < \t^*_1+\dl}e^{-\th \ell}g(X_\ell+\xi+\beta^*_0)\\&\qq\qq-e^{-\th (\t^*_1+\dl)}\Psi(\beta^*_1)+Y^{n-2}_{\t^*_1+\dl}(\t^*_1,\xi+\beta^*_0+\beta^*_1)|\Fc_k]. 
\end{align*}
Continuing this procedure $n$ times, we deduce the existence of a strategy $\d^*=(\t^*_p,\b^*_p)_{p\ge 0}$ that belongs to $\ca_n^k$ such that, for any $k\ge \nu$,  
$$
Y^n_k(\nu,\xi)=
\E[\sum_{\ell\ge k}e^{-\th \ell}g(X^{\d^*}_\ell+\xi)1_{\{\ell \ge \nu\}}-\sum_{p \ge 0}e^{-\th (\t_p^*+\dl)}\Psi(\z^*_{p})|\Fc_k].
$$
On the other hand, the optimality of the choice of $(\t^*_p,\b^*_p)_{p\ge 0}$ implies that, for any $\d \in \ca_n^k$, we have
$$
Y^n_k(\nu,\xi)\ge 
\E[\sum_{\ell\ge k}e^{-\th \ell}g(X^{\d}_\ell+\xi)1_{\{\ell \ge \nu\}}-\sum_{p \ge 0}e^{-\th (\t_p+\dl)}\Psi(\z_{p})|\Fc_k],\quad k\ge \nu.
$$
Therefore, for any $n\ge 0$ and any pair $(\nu,\xi)$, 
$$
Y^n_k(\nu,\xi)=\esssup_{\d \in \ca^k_n}\E[\sum_{\ell\ge k}e^{-\th \ell}g(X^\d_\ell+\xi)1_{\{\ell \ge \nu\}}-\sum_{p \ge 0}e^{-\th (\t_p+\dl)}\Psi(\z_{p})|\Fc_k],\quad  k\ge \nu.
$$
\end{proof}
The strategy $\d_n^*:=(\t^*_p,\b^*_p)_{p\ge 0}$ displayed in the previous proof and which depends on $\nx$ is optimal in $\ca^\n_n$ when the system starts at $\nx$. Namely, we have the following
\begin{corollary}
\begin{align*}
    Y^n_\n(\nu,\xi)&=\esssup_{\d \in \ca_n^\n}\E[\sum_{\ell\ge \n}e^{-\th \ell}g(X^\d_\ell+\xi)1_{\{\ell \ge \nu\}}-\sum_{p \ge 0}e^{-\th (\t_p+\dl)}\Psi(\z_{p})|\Fc_\n]\\
&=
\E[\sum_{\ell\ge \nu}e^{-\th \ell}g(X^{\d^*_n}_\ell+\xi)1_{\{\ell \ge \nu\}}-\sum_{p \ge 0}e^{-\th (\t^*_p+\dl)}\Psi(\b_{p}^*)|\Fc_\n].
\end{align*}

\end{corollary}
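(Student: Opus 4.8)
The plan is to obtain the corollary directly from Proposition \ref{P2} and its proof, by specializing the deterministic running index $k$ to the stopping time $\nu$ itself. Recall that the proof of Proposition \ref{P2} produces an explicit strategy $\d^*_n=(\t^*_p,\b^*_p)_{p\ge 0}$, built recursively from $\t^*_0=\inf\{\ell\ge\nu:\ Y^n_\ell(\nu,\xi)=\co^n_\ell(\nu,\xi)\}$ and the maximizers $\b^*_p$ of the $\max_{\beta\in\U}$-terms, and it is shown there that $\d^*_n\in\ca^k_n$ and that
\[
Y^n_k(\nu,\xi)=\E[\sum_{\ell\ge k}e^{-\th\ell}g(X^{\d^*_n}_\ell+\xi)1_{\{\ell\ge\nu\}}-\sum_{p\ge 0}e^{-\th(\t^*_p+\dl)}\Psi(\b^*_p)|\Fc_k],\qquad k\ge\nu,
\]
while for every $\d\in\ca^k_n$ the reverse inequality $Y^n_k(\nu,\xi)\ge\E[\,\cdots\mid\Fc_k]$ holds. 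Since $\t^*_0\ge\nu$, the strategy $\d^*_n$ belongs to $\ca^\nu_n$.

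First I would take care of the only genuinely non-cosmetic point: passing from a deterministic index $k$ to the random time $\nu$. This is the usual localization. On each event $\{\nu=m\}$ with $m\in\N$ one has $\Fc_\nu\cap\{\nu=m\}=\Fc_m\cap\{\nu=m\}$, $Y^n_\nu(\nu,\xi)=Y^n_m(\nu,\xi)$, and, since $\{\nu=m\}\in\Fc_m$, the essential supremum over $\ca^\nu_n$ restricted to $\{\nu=m\}$ equals that over $\ca^m_n$ restricted to $\{\nu=m\}$; patching over $m\in\N$ upgrades the two relations of Proposition \ref{P2} (valid for every integer $k\ge\nu$) to the corresponding relations with $k$ replaced by $\nu$. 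On $\{\nu=+\infty\}$ both sides of both identities vanish: $1_{\{\ell\ge\nu\}}=0$ for every finite $\ell$, $Y^n_\infty(\nu,\xi)=0$ by Proposition \ref{P0}, and $\t^*_0\ge\nu=+\infty$ forces $\t^*_p=+\infty$ for all $p$, so no impulse is exercised.

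Once this is in place, the first equality of the corollary is precisely Proposition \ref{P2} read at $k=\nu$ (the lower summation limit being immaterial, as $\sum_{\ell\ge\nu}\,\cdot\,1_{\{\ell\ge\nu\}}=\sum_{\ell\ge 0}\,\cdot\,1_{\{\ell\ge\nu\}}$), and the second equality is the displayed optimality identity read at $k=\nu$, together with the observation that $\d^*_n\in\ca^\nu_n$, so that it is admissible and attains the essential supremum.

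I do not expect any real obstacle: all the substance is already contained in Proposition \ref{P2} and its proof. The only things to check are routine, namely the localization that legitimizes substituting $\nu$ for $k$, and the admissibility of $\d^*_n$ in $\ca^\nu_n$, which amounts to the trivial observation that its first intervention time $\t^*_0$ is $\ge\nu$ by construction.
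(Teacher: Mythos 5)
Your proposal is correct and matches the paper's treatment: the paper gives no separate proof, presenting the corollary as the immediate specialization of Proposition \ref{P2} (and of the optimal strategy $\d^*_n$ constructed in its proof) to $k=\nu$. Your added localization over the events $\{\nu=m\}$ and the remark that $\t^*_0\ge\nu$ places $\d^*_n$ in $\ca^\nu_n$ are exactly the routine details the authors leave implicit.
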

\subsection{The general case}
We shall now consider the limit of $Y^n\nx$ as $n\to \infty$. %First, recall that the functions $g(\cdot)$ and $\Psi(\cdot)$ are bounded. 
We have the following
\begin{proposition}\label{conv} There exists a constant $C$ such that, for any pair 
$(\nu,\xi)$ and any $k\ge \nu$,
\begin{equation}\label{Y-n-k-bound}
|Y^n_k(\nu,\xi)|\le C,\qquad k\ge \nu.
\end{equation}
Consequently, the sequence of processes $(Y_k^n(\nu,\xi))_{k\ge \nu}$ converges $\P$-a.s. Moreover, the limit process 
$$
Y_k(\nu,\xi):=\lim_{n\rw \infty}Y^n_k(\nu,\xi),\quad k\ge \nu,
$$
satisfies
\begin{equation}\label{Y-k-nu}
Y_k(\nu,\xi)=\esssup_{\t\ge k}
\E[\sum_{k\le \ell\le\t}e^{-\th \ell}g(X_\ell+\xi)1_{\{\ell \ge \n\}}+\co_\t(\nu,\xi)|\Fc_k], \quad k\ge \nu,
\ee
where
\begin{equation}\label{obst-Y}
\co_k(\nu,\xi)=\left\{
\begin{array}{l}
\E[\underset{k+1\le \ell < k+\dl}{\sum}e^{-\th \ell}g(X_\ell+\xi)1_{\{\ell \ge \n\}}|\Fc_k] \\\qq\qq\q+\max_{\beta \in \U}\E[-e^{-\th (k+\dl)}\Psi(\beta)+Y_{k+\dl}(\n,\xi+\beta)|\Fc_k],\,\,\n\le k<\infty;\\
0 \mbox{ if }k=\infty. 
\end{array}
\right.
\end{equation}
Furthermore, 
\be \lb{Y-Snell}
Y_\infty^-(\nu,\xi)=\lim_{k\rwi}Y_k(\nu,\xi)=0=Y_\infty(\nu,\xi).
\ee
\end{proposition}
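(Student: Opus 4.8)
The plan is to exploit the two structural facts already established: the uniform bound \eqref{proprec} on $Y^n_k(\nu,\xi)$ is $n$-dependent, so the first task is to upgrade it to a genuinely uniform bound, and then the monotonicity of Proposition~\ref{P1} gives convergence for free. For the uniform bound \eqref{Y-n-k-bound}, I would use the probabilistic representation of Proposition~\ref{P2}: for $k\ge\nu$, $Y^n_k(\nu,\xi)$ is the essential supremum over $\d\in\ca^k_n$ of $\E[\sum_{\ell\ge k}e^{-\th\ell}g(X^\d_\ell+\xi)1_{\{\ell\ge\nu\}}-\sum_{p\ge0}e^{-\th(\t_p+\dl)}\Psi(\z_p)\mid\Fc_k]$. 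Since $g$ is bounded, the first sum is bounded in absolute value by $(1-e^{-\th})^{-1}\|g\|_\infty$ uniformly in $\d$ and $n$; for the second sum, the admissibility constraint $\t_{p+1}-\t_p\ge\dl$ forces $\t_p\ge p\dl$ (using $\t_0\ge0$), hence $\sum_{p\ge0}e^{-\th(\t_p+\dl)}|\Psi(\z_p)|\le \|\Psi\|_\infty e^{-\th\dl}\sum_{p\ge0}e^{-\th p\dl}=\|\Psi\|_\infty e^{-\th\dl}(1-e^{-\th\dl})^{-1}$, again uniform in $\d$ and $n$. Taking $C:=(1-e^{-\th})^{-1}\|g\|_\infty+\|\Psi\|_\infty e^{-\th\dl}(1-e^{-\th\dl})^{-1}$ gives \eqref{Y-n-k-bound}.

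Granting \eqref{Y-n-k-bound}, Proposition~\ref{P1} says $(Y^n_k(\nu,\xi))_n$ is non-decreasing and bounded, so it converges $\P$-a.s. to a finite limit $Y_k(\nu,\xi)$, with $|Y_k(\nu,\xi)|\le C$; by dominated convergence the convergence also holds in $L^p(\P)$ for every $p\in[1,\infty)$, which is what will be needed to pass limits through conditional expectations. I would first establish the convergence of the obstacle: from \eqref{obst1}, $\co^n_k(\nu,\xi)$ differs from $\co_k(\nu,\xi)$ in \eqref{obst-Y} only through the term $\max_{\beta\in\U}\E[-e^{-\th(k+\dl)}\Psi(\beta)+Y^{n-1}_{k+\dl}(\n,\xi+\beta)\mid\Fc_k]$; since $\U$ is finite, the $\max$ is over finitely many arguments, and $Y^{n-1}_{k+\dl}(\n,\xi+\beta)\to Y_{k+\dl}(\n,\xi+\beta)$ in $L^1(\P)$ uniformly in $\beta\in\U$ by the uniform bound, so $\co^n_k(\nu,\xi)\to\co_k(\nu,\xi)$ in $L^1(\P)$, and $\co_\infty(\nu,\xi)=0$. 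The bound \eqref{estokn} together with $|Y^{n-1}_{k+\dl}|\le C$ shows $\co^n_k(\nu,\xi)$ is dominated in a way that gives $\lim_{k\rwi}\co_k(\nu,\xi)=0$.

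Next I would pass to the limit in the Snell-envelope characterization \eqref{eq6-n-k}. The cleanest route is the backward recursion \eqref{eq2-tilde-n} for the shifted processes $\widetilde Y^n_k(\nu,\xi)$: each satisfies $\widetilde Y^n_k=\max\{\E[\widetilde Y^n_{k+1}\mid\Fc_k],\,L^n_k(\nu,\xi)\}$ with terminal value $L^n_\infty(\nu,\xi)=\sum_{\ell\ge0}e^{-\th\ell}g(X_\ell+\xi)1_{\{\ell\ge\nu\}}$. Define $\widetilde Y_k(\nu,\xi):=Y_k(\nu,\xi)+\sum_{\ell\le k-1}e^{-\th\ell}g(X_\ell+\xi)1_{\{\ell\ge\nu\}}$; since $L^n_k\to L_k(\nu,\xi):=\sum_{\ell\le k}e^{-\th\ell}g(X_\ell+\xi)1_{\{\ell\ge\nu\}}+\co_k(\nu,\xi)$ in $L^1$, passing to the limit $n\to\infty$ in the recursion (using monotone/dominated convergence for the conditional expectation on the right) shows $\widetilde Y_k(\nu,\xi)=\max\{\E[\widetilde Y_{k+1}(\nu,\xi)\mid\Fc_k],\,L_k(\nu,\xi)\}$ with the same terminal value. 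Thus $(\widetilde Y_k(\nu,\xi))_{k\ge\nu}$ is the Snell envelope of $(L_k(\nu,\xi))_{k\ge\nu}$, which is precisely \eqref{Y-k-nu} after undoing the shift. Finally \eqref{Y-Snell} follows by letting $k\rwi$: $Y_\infty^-(\nu,\xi)=\lim_k Y_k(\nu,\xi)$ exists since $\widetilde Y$ is a supermartingale bounded in $L^1$, and the estimate $|Y_k(\nu,\xi)|\le C$ is not by itself enough — one argues $Y_k(\nu,\xi)=\esssup_{\t\ge k}\E[\cdots\mid\Fc_k]$ with every argument tending to $0$ as $k\rwi$ (the $g$-sum tail vanishes and $\co_\t\to0$), so $Y_\infty^-(\nu,\xi)=0=Y_\infty(\nu,\xi)$.

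The main obstacle I anticipate is the interchange of limits in $n$ with the essential supremum over stopping times in \eqref{eq6-n-k}: handling it directly would require an argument that the optimal stopping times for level $n$ can be transferred, which is delicate. Routing everything through the pathwise backward recursion \eqref{eq2-tilde-n} — where the only limit to justify is inside an ordinary conditional expectation, and $L^1$-convergence of the obstacles suffices — avoids this and is the step I would be most careful to get right.
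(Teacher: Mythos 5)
Your proposal follows essentially the same route as the paper: the uniform bound via the representation of Proposition~\ref{P2} together with $\t_{p+1}-\t_p\ge\dl$, monotone convergence from Proposition~\ref{P1}, convergence of the obstacles using the finiteness of $\U$, and identification of the limit by passing to the limit in the backward recursion \eqref{eq2-tilde-n} for the shifted Snell envelopes. That is exactly the paper's argument, including your decision to route the identification through the recursion rather than through the essential supremum directly.

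One point needs fixing. In your bound for the impulse term you use $\t_0\ge 0$, hence $\t_p\ge p\dl$, which yields a constant $C$ with no decay in $k$. The paper instead uses $\t_0\ge k$ for $\d\in\ca^k_n$, hence $\t_p\ge k+p\dl$, and obtains $|Y^n_k(\nu,\xi)|\le e^{-\th k}C$ uniformly in $n$ (see \eqref{estykn}--\eqref{estyinf3}). This extra factor $e^{-\th k}$ is not cosmetic: it is what gives $\lim_{k\rwi}Y_k(\nu,\xi)=0$ immediately, and it is also what makes $\lim_{k\rwi}\co_k(\nu,\xi)=0$, since $\co_k$ contains the term $\max_{\beta}\E[Y_{k+\dl}(\nu,\xi+\beta)\,|\,\Fc_k]$, which under the bound $|Y_{k+\dl}|\le C$ alone does not vanish. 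Your proposed patch for \eqref{Y-Snell} (``every argument tends to $0$ because $\co_\t\to 0$'') is therefore circular as written: the decay of $\co_\t$ uniformly over stopping times $\t\ge k$ requires a deterministic envelope $|\co_m|\le\eps_m\downarrow 0$, which in turn requires the decay of $Y$ itself; and the $n$-dependent estimate \eqref{estokn} cannot be passed to the limit. The repair is one line: keep the factor $e^{-\th k}$ coming from $\t_0\ge k$ in the uniform bound, after which both $Y_\infty^-(\nu,\xi)=0$ and $\co_\infty^-(\nu,\xi)=0$ follow at once. With that correction your argument coincides with the paper's.
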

\begin{proof} We first derive \eqref{Y-n-k-bound}. Recall that 
$$
Y^n_k(\nu,\xi)=\esssup_{\d \in \ca^k_n}\E[\sum_{\ell\ge k}e^{-\th \ell}g(X^\d_\ell+\xi)1_{\{\ell \ge \nu\}}-\sum_{p \ge 0}e^{-\th (\t_p+\dl)}\Psi(\z_{p})|\Fc_k],\qquad  k\ge \nu.
$$
For any $\d=(\t_p,\xi_p)_{p\ge 0}\in \ca^k_n$, we have 
$\t_0\ge k$, $\t_n=\infty$ and $\t_{p+1}\ge \t_p+\Dl$. Therefore, 
$\t_p\ge k+p\Dl$ for any $p\ge 0$. So, 
$$
|-\sum_{p \ge 0}e^{-\th (\t_p+\dl)}\Psi(\z_{p})|\le e^{-\Dl \theta}\sum_{p=0}^{n-1}e^{-\theta (k+p\Dl)}\|\Psi\|_\infty\le 
e^{-(\Dl+k) \theta}(1-e^{-\theta \Dl})^{-1}\|\Psi\|_\infty.
$$
Since $g(\cdot)$ is bounded, the above characterization of $Y^n_k\nx$ implies that, for any $k\ge \nu$, 
\be\lb{estykn}
|Y^n_k(\nu,\xi)|\le 
e^{-k \theta}(1-e^{-\theta })^{-1}\|g\|_\infty+
e^{-(\Dl+k) \theta}(1-e^{-\theta \Dl})^{-1}\|\Psi\|_\infty.\ee
So, let us take a constant $C$ to be 
$$C:=(1-e^{-\theta })^{-1}\|g\|_\infty+
e^{-\Dl\theta}(1-e^{-\theta \Dl})^{-1}\|\Psi\|_\infty.$$
Then, for any $k\ge \nu$, $|Y^n_k(\nu,\xi)|\le C$, $\P$-a.s. Next, since the sequence of processes $(Y^n_{k}(\xi,\nu))_{k\ge \nu}$ is increasing and bounded by $C$, it converges $\P$-a.s. to a bounded process $(Y_{k}(\xi,\nu))_{k\ge \nu}$ which, due to \eqref{estykn}, satisfies 
\be\lb{estyinf3}
|Y_{k}(\xi,\nu)|\le e^{-k \theta} C,\qquad k\ge \nu.
\ee
Therefore, 
$$
\lim_{k\rwi}Y_k(\nu,\xi)=0
=Y_\infty(\nu,\xi).
$$
Next, since $\U$ is finite, by \eqref{obst} and conditional dominated convergence we have $\P$-a.s.
$$
\underset{n\to\infty}{\lim}\co^n_k(\nu,\xi)=\co_k(\nu,\xi),\quad k\ge \nu,
$$
where, for $k\ge \n$, 
$$
\begin{array}{lll}
\co_k(\nu,\xi)=\E[\underset{k+1\le \ell < k+\dl}{\sum}e^{-\th \ell}g(X_\ell+\xi)1_{\{\ell \ge \n\}}|\Fc_k] \\ \qquad\qquad\qquad\qquad\qquad\qquad +\underset{\beta \in \U}{\max}\, \E[-e^{-\th (k+\dl)}\Psi(\beta)+Y_{k+\dl}(\n,\xi+\beta)|\Fc_k].
\end{array}
$$
Recall the Snell envelope  $(\widetilde{Y}_k^n(\nu,\xi))_{k\ge 0}$ defined by \eqref{Y-tilde-n} and associated with the obstacle process $(L^n_k\nx)_{k\ge 0}$ given by \eqref{L-k-n}. Since 
%$(Y_k^n(\nu,\xi))_{k\ge \nu}$ and 
$g(\cdot)$ is bounded, the Snell envelope $(\widetilde{Y}_k^n(\nu,\xi))_{k\ge \nu}$ converges $\P$-a.s. to the process $(\widetilde{Y}_k(\nu,\xi))_{k\ge \nu}$ defined, for any $(\nu,\xi)$, by
\be\lb{Y-tilde}
\widetilde{Y}_k(\xi,\nu):=Y_k(\nu,\xi)+\sum_{\ell \le k-1}e^{-\th \ell}g(X_\ell+\xi)1_{\{\ell \ge \n\}},\quad k\ge \nu.
\ee
By \eqref{eq2-tilde-n} and conditional dominated convergence, we have $\P$-a.s.
\be \lb{eq2-tilde-nxx}\left\{\begin{array}{l}
\widetilde{Y}_k(\nu,\xi)=\max\{\E[\widetilde{Y}_{k+1}(\nu,\xi)|\Fc_k], \underbrace{\sum_{\ell\le k}e^{-\th \ell}g(X_\ell+\xi)1_{\{\ell \ge \n\}}+\co_k(\nu,\xi)}_{L_k\nx}\},\,\,  k\ge \n.
\\
\widetilde{Y}_\infty(\nu,\xi)=L_\infty\nx=
\sum_{\ell\ge 0}e^{-\th \ell}g(X_\ell+\xi)1_{\{\ell \ge \n\}}.
\end{array}\right.
\ee
Thus, $(\widetilde{Y}_{k}(\xi,\nu))_{k\ge \nu}$ is the Snell envelope associated with the obstacle process  $(L_k\nx)_{k\ge \n}$. Therefore, for any $k\ge \nu$,
$$
Y_k(\nu,\xi)+\sum_{\ell \le k-1}e^{-\th \ell}g(X_\ell+\xi)1_{\{\ell \ge \n\}}=\esssup_{\t\ge k}
\E[\sum_{\ell\le\t}e^{-\th \ell}g(X_\ell+\xi)1_{\{\ell \ge \n\}}+\co_\t(\nu,\xi)|\Fc_k],
$$
which in turn yields \eqref{Y-k-nu}.
\end{proof}
As a consequence of Proposition \ref{cs}, we also have the following consistency property for the limit processes $(Y_k\nx)_{k\ge \n}$.
\begin{corollary}
\label{csy}
Let $\nu$ be a stopping time and $\xi$ a $\U$-valued and $\Fc_{\nu}$-measurable r.v. Then, for any stopping time $\nu'\ge \nu$, $\P$-a.s, it holds that, 
$$
Y_k(\nu,\xi)=Y_k(\nu',\xi), \quad k\ge \nu'.
$$
\end{corollary}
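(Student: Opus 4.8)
The plan is to reduce the statement to the finite-truncation consistency already established in Proposition \ref{cs} and then pass to the limit in $n$. First I would check that the right-hand side even makes sense: since $\nu \le \nu'$ $\P$-a.s., we have $\Fc_\nu \subseteq \Fc_{\nu'}$, so the $\U$-valued, $\Fc_\nu$-measurable random variable $\xi$ is in particular $\Fc_{\nu'}$-measurable; hence the pair $(\nu',\xi)$ is an admissible initial datum and $(Y^n_k(\nu',\xi))_{k\in \bbn}$ is well-posed for every $n\ge 0$ by Proposition \ref{P0}. In particular, by Proposition \ref{conv} applied to $(\nu',\xi)$, the limit $Y_k(\nu',\xi)=\lim_{n\to\infty}Y^n_k(\nu',\xi)$ exists $\P$-a.s.\ for $k\ge \nu'$.

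Next I would invoke Proposition \ref{cs} with the pair $(\nu,\xi)$ and the stopping time $\nu'\ge \nu$: for each fixed $n\ge 0$ there is a $\P$-null set $N_n$ off which $Y^n_k(\nu,\xi)=Y^n_k(\nu',\xi)$ for every $k\ge \nu'$. Putting $N:=\bigcup_{n\ge 0}N_n$, which is still $\P$-null because the index set is countable, the identity $Y^n_k(\nu,\xi)=Y^n_k(\nu',\xi)$ holds simultaneously for all $n\ge 0$ and all $k\ge \nu'$ outside $N$.

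Finally I would let $n\to\infty$. By Proposition \ref{conv}, the increasing bounded sequences $(Y^n_k(\nu,\xi))_{n\ge 0}$ and $(Y^n_k(\nu',\xi))_{n\ge 0}$ converge $\P$-a.s.\ to $Y_k(\nu,\xi)$ and $Y_k(\nu',\xi)$ respectively; taking the limit in the equality that is valid off $N$ gives $Y_k(\nu,\xi)=Y_k(\nu',\xi)$ for all $k\ge \nu'$, $\P$-a.s., which is the claim. There is no genuine obstacle here, since the substantive work was carried out in Propositions \ref{cs} and \ref{conv}; the only points deserving a word of care are the inclusion $\Fc_\nu \subseteq \Fc_{\nu'}$ that legitimizes forming $Y^n_k(\nu',\xi)$, and the countability of the exceptional sets indexed by $n$, which guarantees that their union remains $\P$-negligible.
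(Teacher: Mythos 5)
Your argument is correct and is exactly the route the paper intends: the corollary is stated there without proof as "a consequence of Proposition \ref{cs}", and your passage to the limit via the monotone bounded convergence of $Y^n_k$ from Proposition \ref{conv}, together with the countable union of null sets and the observation that $\Fc_\nu\subseteq\Fc_{\nu'}$ makes $(\nu',\xi)$ admissible, is precisely the expected filling-in of that step.
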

\begin{remark} Note that the estimate \eqref{Y-n-k-bound} is valid only for $k\ge \n$. This estimate is enough to solve our impulse control problem in this discrete time setting with delay. However, when $\,\Psi\ge 0$, one can show easily by induction on $n$ that the process $(Y_k^{n}(\nu,\xi))_{k\ge 0}$ satisfies the following estimate: for any $k\ge 0$,
$$|Y_k^{n}(\nu,\xi)|\le e^{-k\th}(1-\th)^{-1}\|g\|_\infty, $$
which  means that under the assumption $\Psi\ge 0$, the estimate \eqref{Y-n-k-bound} is valid not only for $k\ge \n$ but also for $k<\nu$. Finally, when the sign of $\Psi$ is negative, one can also obtain a similar estimate for $Y_k^{n}(\nu,\xi)$ but the r.h.s. contains $\|\Psi\|_\infty$ (see Section \ref{risk-s} below for more details).  
\end{remark}

The following is the main result of this section.
\begin{theorem}\lb{opt-strat-rn} The discrete time impulse control with delay problem admits an optimal strategy $\d^*=(\t_n^*,\xi_n^*)_{n\ge 0}$. Moreover,
$$
Y_0(0,0):=Y_0(\n,\xi)_{|(\n,\xi)=(0,0)}=\sup_{\d \in \ca}J(\d)=J(\d^*).
$$
\end{theorem}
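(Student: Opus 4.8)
The plan is to read the value of the problem off the finite-impulse approximation (Proposition \ref{P2}) together with the convergence and Snell-envelope description of the limit process (Proposition \ref{conv}), and then to construct $\d^*$ by iterating, at the level of the limit process $Y_\cdot(\cdot,\cdot)$, the optimal-stopping-plus-intervention scheme already used in the proof of Proposition \ref{P2}.

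\emph{Identifying the value.} Take $(\n,\xi)=(0,0)$ and $k=0$ in Proposition \ref{P2}. Since $1_{\{\ell\ge 0\}}\equiv 1$ and $\ca_n^0=\ca_n$, this gives $Y^n_0(0,0)=\esssup_{\d\in\ca_n}\E[\sum_{\ell\ge 0}e^{-\th\ell}g(X^\d_\ell)-\sum_{p\ge 0}e^{-\th(\t_p+\dl)}\Psi(\xi_p)\,|\,\Fc_0]$, whence $\E[Y^n_0(0,0)]=\sup_{\d\in\ca_n}J(\d)$ (the family of controlled payoffs being directed upward; equivalently $Y^n_0(0,0)$ is deterministic when $\Fc_0$ is trivial). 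By Proposition \ref{P1} the sequence $(Y^n_0(0,0))_n$ is nondecreasing, and by Proposition \ref{conv} it converges to $Y_0(0,0)$, so monotone convergence yields $\E[Y_0(0,0)]=\sup_n\sup_{\d\in\ca_n}J(\d)$. It remains to identify the right-hand side with $\sup_{\d\in\ca}J(\d)$. The inequality $\le$ is clear because $\ca_n\subseteq\ca$; for the reverse, given $\d=(\t_p,\xi_p)_{p\ge 0}\in\ca$, the truncation $\d^{(n)}$ obtained by setting $\t_p=+\infty$ for $p\ge n$ lies in $\ca_n$, and since $\t_p\ge p\dl$ and $g,\Psi$ are bounded, $X^{\d^{(n)}}$ and $X^\d$ agree on $\{k<\t_n+\dl\}$, so $|J(\d)-J(\d^{(n)})|\le 2\|g\|_\infty\sum_{k\ge (n+1)\dl}e^{-\th k}+\|\Psi\|_\infty\sum_{p\ge n}e^{-\th(p+1)\dl}\to 0$. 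Hence $\E[Y_0(0,0)]=\sup_{\d\in\ca}J(\d)$.

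\emph{Constructing $\d^*$.} Mimic the scheme in the proof of Proposition \ref{P2}, but with the limit data of Proposition \ref{conv}: set $\t_0^*:=\inf\{\ell\ge 0:\ Y_\ell(0,0)=\co_\ell(0,0)\}$ and let $\xi_0^*$ be a $\U$-valued $\Fc_{\t_0^*}$-measurable selector attaining the maximum over $\beta\in\U$ in \eqref{obst-Y} at $\t_0^*$; inductively, set $\t_{p+1}^*:=\inf\{\ell\ge \t_p^*+\dl:\ Y_\ell(\t_p^*,\xi_0^*+\cdots+\xi_p^*)=\co_\ell(\t_p^*,\xi_0^*+\cdots+\xi_p^*)\}$ with a selector $\xi_{p+1}^*$ at $\t_{p+1}^*$ (all later $\t^*$ being $+\infty$ once one of them is). Because $\co_k(\cdot)\to 0$ as $k\to\infty$ by \eqref{Y-Snell}, each $\t_p^*$ is optimal for the Snell envelope in \eqref{Y-k-nu} (cf. \cite{hamahassa}, p.184), and $\t_{p+1}^*\ge\t_p^*+\dl$ by construction, so $\d^*:=(\t_p^*,\xi_p^*)_{p\ge 0}\in\ca$. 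Exactly as in \eqref{eq7}, using the optimality of $\t_0^*$, the definition \eqref{obst-Y} of $\co_{\t_0^*}(0,0)$, and the consistency Corollary \ref{csy} (to replace $Y_{\t_0^*+\dl}(0,\xi_0^*)$ by $Y_{\t_0^*+\dl}(\t_0^*,\xi_0^*)$), one gets $Y_0(0,0)=\E[\sum_{\ell<\t_0^*+\dl}e^{-\th\ell}g(X_\ell)-e^{-\th(\t_0^*+\dl)}\Psi(\xi_0^*)+Y_{\t_0^*+\dl}(\t_0^*,\xi_0^*)\,|\,\Fc_0]$; iterating $p+1$ times and recognising $X^{\d^*}$ via \eqref{X-delta} gives $Y_0(0,0)=\E[\sum_{\ell<\t_p^*+\dl}e^{-\th\ell}g(X^{\d^*}_\ell)-\sum_{q=0}^{p}e^{-\th(\t_q^*+\dl)}\Psi(\xi_q^*)+Y_{\t_p^*+\dl}(\t_p^*,\xi_0^*+\cdots+\xi_p^*)\,|\,\Fc_0]$.

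\emph{Passing to the limit.} Let $p\to\infty$. Since $\t_p^*\ge p\dl\to\infty$, the estimate \eqref{estyinf3} gives $|Y_{\t_p^*+\dl}(\t_p^*,\cdot)|\le Ce^{-\th(\t_p^*+\dl)}\le Ce^{-\th(p+1)\dl}\to 0$; the partial sums converge absolutely (using $\t_q^*\ge q\dl$ and boundedness of $g,\Psi$) to $\sum_{\ell\ge 0}e^{-\th\ell}g(X^{\d^*}_\ell)$ and $\sum_{q\ge 0}e^{-\th(\t_q^*+\dl)}\Psi(\xi_q^*)$, and everything is uniformly dominated, so dominated convergence yields $Y_0(0,0)=\E[\sum_{\ell\ge 0}e^{-\th\ell}g(X^{\d^*}_\ell)-\sum_{q\ge 0}e^{-\th(\t_q^*+\dl)}\Psi(\xi_q^*)\,|\,\Fc_0]$, i.e. $\E[Y_0(0,0)]=J(\d^*)$. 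Combined with the first step, $J(\d^*)=\sup_{\d\in\ca}J(\d)$, so $\d^*$ is optimal and $Y_0(0,0)=\sup_{\d\in\ca}J(\d)=J(\d^*)$ (the pointwise identity holding when $\Fc_0$ is trivial). The main obstacle is the construction step: one must run the optimal-stopping and impulse-selection procedure infinitely many times and control the residual term $Y_{\t_p^*+\dl}(\t_p^*,\cdot)$ as $p\to\infty$ — this is exactly where the decay bound \eqref{estyinf3} (valid for $k\ge\n$) and the enforced separation $\t_{p+1}^*-\t_p^*\ge\dl$ are needed — while also checking, via $\co_k\to 0$, that each $\t_p^*$ is a genuine optimal stopping time for the limiting Snell envelope, not merely an $\varepsilon$-optimal one.
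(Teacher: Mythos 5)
Your proposal is correct, and its second and third steps (the construction of $\delta^*=(\tau_p^*,\xi_p^*)_{p\ge 0}$ from the limit process, the forward iteration using optimality of each $\tau_p^*$, the consistency property of Corollary \ref{csy}, and the vanishing of the residual $Y_{\tau_p^*+\Delta}(\tau_p^*,\cdot)$ via \eqref{estyinf3} and $\tau_p^*\ge p\Delta$) coincide with the paper's argument. Where you genuinely diverge is in the optimality half. The paper never returns to Proposition \ref{P2}: for an arbitrary admissible $\delta$ it runs the same forward iteration with inequalities (an arbitrary stopping time $\tau_0$ is suboptimal for the Snell envelope, an arbitrary $\zeta_0$ is suboptimal for the max over $\beta$), obtaining $Y_0(0,0)\ge J(\delta)$ directly and hence $J(\delta^*)=\sup_{\delta}J(\delta)$. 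You instead identify $\E[Y_0(0,0)]=\sup_n\sup_{\delta\in\mathcal{A}_n}J(\delta)$ from Proposition \ref{P2} plus monotone convergence, and then close the gap between $\bigcup_n\mathcal{A}_n$ and $\mathcal{A}$ by a truncation estimate using $\tau_n\ge n\Delta$ and the boundedness of $g$ and $\Psi$. Both routes are sound; yours recycles work already done in Proposition \ref{P2} and makes the density of finitely-many-impulse strategies explicit (which the paper only exploits later, in the $\varepsilon$-optimality result), at the cost of having to justify the interchange $\E[\esssup\cdots]=\sup\E[\cdots]$ (directedness of the family of conditional payoffs, which you correctly flag), whereas the paper's direct iteration avoids that interchange but duplicates the dynamic-programming machinery in inequality form. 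One cosmetic point: the paper's item (i) writes $\tau_0^*=\inf\{\ell\ge\Delta:\cdots\}$ while its verification uses $\inf\{\ell\ge 0:\cdots\}$ as you do; this is an inconsistency in the paper, not a defect of your argument.
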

\def \os {\d^*=(\t_n^*,\xi_n^*)_{n\ge 0}}
\begin{proof}
\nd  First, consider the following strategy $\d^*=(\t_n^*,\xi_n^*)_{n\ge 0}$ defined as follows.

\nd (i) $$
\t^*_0=\inf\{\ell \ge \Delta,\,\, 
Y_\ell(0,0)=\co_\ell(0,0)\}.
$$
(ii) $\xi_0^*$ is the $\U$-valued and $\Fc_{\t_0^*}$-measurable r.v. such that 
%\newpage
\begin{align}\lb{eqsimplifo}
\max_{\beta \in \U}\E[-e^{-\th (\t^*_0+\dl)}\Psi(\beta)+Y_{\t^*_0+\dl}(0,\beta)|\Fc_{\t^*_0}]
&=\max_{\beta \in \U}\E[-e^{-\th (\t^*_0+\dl)}\Psi(\beta)+Y_{\t^*_0+\dl}(\t^*_0,\beta)|\Fc_{\t^*_0}]\nn\\&=
\E[-e^{-\th (\t^*_0+\dl)}\Psi(\xi_0^*)+Y_{\t^*_0+\dl}(\t^*_0,\xi_0^*)|\Fc_{\t^*_0}].
\end{align}

\nd(iii) For $n\ge 1$,
$$
\t^*_n=\inf\{\ell \ge \Delta+\t^*_{n-1},\,\, 
Y_\ell(\t^*_{n-1},\xi_0^*+\ldots+\xi_{n-1}^*)=\co_\ell(\t^*_{n-1},\xi_0^*+\ldots+\xi_{n-1}^*)\}
$$
(iv) $\xi_n^*$ is the $\U$-valued and $\Fc_{\t_0^*}$-measurable r.v. such that
\begin{align}
&\max_{\beta \in \U}\E[-e^{-\th (\t^*_n+\dl)}\Psi(\beta)+Y_{\t^*_n+\dl}(\t^*_n,\xi_0^*+\ldots+\xi_{n-1}^*+\beta)|\Fc_{\t^*_n}]=
\nn \\&\qquad\qquad \qquad \qquad \qquad \E[-e^{-\th (\t^*_n+\dl)}\Psi(\xi_n^*)+Y_{\t^*_n+\dl}(\t^*_n,\xi_0^*+\ldots+\xi_{n-1}^*+\xi_n^*)|\Fc_{\t^*_n}].
\end{align}
\def \D {\Delta}
The strategy $\d^*$ is admissible since, for any $n\ge 0$, $\t^*_n$ is a stopping time, $\t^*_n-\t^*_{n-1}\ge \D$ and $\xi_n^*$ is a $\U$-valued r.v. which is moreover $\Fc_{\t^*_n}$-measurable. We will next show that
$$
Y_0(0,0)=J(\d^*).
$$
Indeed, recall that $(Y_k(\nu,\xi))_{k\ge \nu}$ satisfies
\begin{equation}\label{Y-k-nu1xx}
Y_k(\nu,\xi)=\esssup_{\t\ge k}
\E[\sum_{k\le \ell\le\t}e^{-\th \ell}g(X_\ell+\xi)1_{\{\ell \ge \n\}}+\co_\t(\nu,\xi)|\Fc_k], \quad k\ge \nu.
\ee
Therefore, by taking $\nu=0$ and $\xi=0$, which corresponds to the initial state of the system, it holds that
\begin{equation}\label{Y-k-u}
Y_k(0,0)=\esssup_{\t\ge k}
\E[\sum_{k\le \ell\le\t}e^{-\th \ell}g(X_\ell)+\co_\t(0,0)|\Fc_k], \quad k\ge 0.
\ee
Since
$$
\t^*_0=\inf\{\ell \ge 0,\,\, 
Y_\ell(0,0)=\co_\ell(0,0)\},
$$
it is optimal for \eqref{Y-k-u} with $k=0$ simply  because
$\underset{k\rwi}{\lim}\,\co_k(0,0)=\co_\infty(0,0)=0$ and $g(\cdot)$ is bounded. So, $\t^*_0$ satisfies
\begin{align}
\lb {eq7x}
Y_0(0,0)& =
\E[\sum_{0\le \ell\le\t_0^*}e^{-\th \ell}g(X_\ell)+\co_{\t_0^*}(0,0)].
\end{align}
But,
$$\begin{array}{ll}
\co_{\t_0^*}(0,0)
&=\E[\underset{\t^*_0+1\le \ell < \t^*_0+\dl}{\sum}\,e^{-\th \ell}g(X_\ell)|\Fc_{\t^*_0}] +\underset{\beta \in \U}{\max}\,\E[-e^{-\th (\t^*_0+\dl)}\Psi(\beta)+Y_{\t^*_0+\dl}(0,\beta)|\Fc_{\t^*_0}]\\
&=\E[\underset{\t^*_0+1\le \ell < \t^*_0+\dl}{\sum}\,e^{-\th \ell}g(X_\ell)|\Fc_{\t^*_0}] +\underset{\beta \in \U}{\max}\,\E[-e^{-\th (\t^*_0+\dl)}\Psi(\beta)+Y_{\t^*_0+\dl}(\t^*_0,\beta)|\Fc_{\t^*_0}]\\
& =\E[\underset{\t^*_0+1\le \ell < \t^*_0+\dl}{\sum}\,e^{-\th \ell}g(X_\ell)|\Fc_{\t^*_0}]+ \E[-e^{-\th (\t^*_0+\dl)}\Psi(\xi_0^*)+Y_{\t^*_0+\dl}(\t^*_0,\xi_0^*)|\Fc_{\t^*_0}], \end{array}
$$
where the second equality is due to the consistency property of Corollary \ref{csy} since $\b\in \U$ is deterministic and then $\Fc_{\t^*_0}$-measurable. Insert the last expression of $\co_{\t_0^*}(0,0)$ in the r.h.s. of \eqref{eq7x} to obtain
\begin{align}
Y_0(0,0) =
\E[\sum_{0\le \ell<\t_0^*+\Dl}e^{-\th \ell}g(X_\ell)-e^{-\th (\t^*_0+\dl)}\Psi(\xi_0^*)+Y_{\t^*_0+\dl}(\t^*_0,\xi_0^*)].
\end{align}
Next, we have 
$$Y_{\t^*_0+\dl}(\t^*_0,\xi_0^*)=\E[\sum_{{\t^*_0+\dl}\le \ell\le\t_1^*}e^{-\th \ell}g(X_\ell+\xi_0^*)+\co_{\t_1^*}(\t^*_0,\xi_0^*)|\Fc_{\t^*_0+\dl}]
$$since $\t_1^*$ is optimal after 
$\t^*_0+\dl$. On the other hand,
\begin{align}
\co_{\t_1^*}(\t^*_0,\xi_0^*)=&\E[\underset{{\t_1^*}+1\le \ell < {\t_1^*}+\dl}{\sum}e^{-\th \ell}g(X_\ell+\xi_0^*)|\Fc_{\t_1^*}]\nn \\ &\qquad\qquad\qquad\qquad +\max_{\beta \in \U}\E[-e^{-\th ({\t_1^*}+\dl)}\Psi(\beta)+Y_{{\t_1^*}+\dl}(\t_0^*,\xi_0^*+\beta)|\Fc_{\t_1^*}]\nn\\
=&\E[\underset{{\t_1^*}+1\le \ell < {\t_1^*}+\dl}{\sum}e^{-\th \ell}g(X_\ell+\xi_0^*)|\Fc_{\t_1^*}] \nn \\ &\qquad\qquad\qquad\qquad +\max_{\beta \in \U}\E[-e^{-\th ({\t_1^*}+\dl)}\Psi(\beta)+Y_{{\t_1^*}+\dl}(\t_1^*,\xi_0^*+\beta)|\Fc_{\t_1^*}]\lb{ot1}\\
=&\E[\underset{{\t_1^*}+1\le \ell < {\t_1^*}+\dl}{\sum}e^{-\th \ell}g(X_\ell+\xi_0^*)|\Fc_{\t_1^*}] \nn \\ &\qquad\qquad\qquad\qquad +\E[-e^{-\th ({\t_1^*}+\dl)}\Psi(\xi_1^*)+Y_{{\t_1^*}+\dl}(\t_1^*,\xi_0^*+\xi_1^*)|\Fc_{\t_1^*}].\nn 
\end{align}
By inserting the last expression of \eqref{ot1} in the ones of $Y_{\t^*_0+\dl}(\t^*_0,\xi_0^*)$ and $Y_0(0,0)$ successively, we obtain
\begin{align*}
Y_0(0,0)=&\E[\sum_{0\le \ell<\t_0^*+\Dl}e^{-\th \ell}g(X_\ell)-e^{-\th (\t^*_0+\dl)}\Psi(\xi_0^*)+\sum_{{\t^*_0+\dl}\le \ell<\t_1^*+\dl}e^{-\th \ell}g(X_\ell+\xi_0^*)+\\ & \qquad -e^{-\th ({\t_1^*}+\dl)}\Psi(\xi_1^*)+Y_{{\t_1^*}+\dl}(\t_1^*,\xi_0^*+\xi_1^*)].
\end{align*}
By repeating this procedure $n$ times yields
\begin{align*}
Y_0(0,0)=&\E[\sum_{0\le \ell<{\t^*_n+\dl}}e^{-\th \ell}g(X^{\d^*}_\ell)-\sum_{0\le \ell\le n}\Psi (\xi^*_\ell)e^{-\th (\t^*_\ell+\dl)}+Y_{\t^*_n+\dl}(\t^*_{n},\xi_0^*+...+\xi_{n}^*)].
\end{align*}
We have 
$$
|Y_0(0,0)-J(\d^*)|\le \E[|\sum_{\t^*_n+\dl\le \ell}e^{-\th \ell}g(X^{\d^*}_\ell)|+|\sum_{n+1\le \ell}\Psi (\xi^*_\ell)e^{-\th (\t^*_\ell+\dl)}|+|Y_{\t^*_n+\dl}(\t^*_{n},\xi_0^*+...+\xi_{n}^*)|].
$$
Since $\t_n^*\ge n\dl$, $g(\cdot)$ and $\Psi(\cdot)$ are bounded, due to \eqref{estyinf3} the r.h.s. of the above inequality converges to $0$, as $n$ goes to infinity, which means that $Y_0(0,0)=J(\d^*)$.

Let $\d=(\t_n,\z_n)_{n\ge 0}$ be an admissible strategy. Since $\t_0$ is an arbitrary stopping time, then 
\begin{equation}\label{Y-k-nu2}
Y_0(0,0)\ge \E[\sum_{0\le \ell\le\t_0}e^{-\th \ell}g(X_\ell)+\co_{\t_0}(0,0)]. 
\ee
But, 
$$\begin{array}{ll}\co_{\t_0}(0,0)&=
\E[\underset{\t_0+1\le \ell < \t_0+\dl}{\sum}e^{-\th \ell}g(X_\ell)|\Fc_{\t_0}] +\max_{\beta \in \U}\E[-e^{-\th (\t_0+\dl)}\Psi(\beta)+Y_{\t_0+\dl}(0,\beta)|\Fc_{\t_0}]\\
&\ge 
\E[\underset{\t_0+1\le \ell < \t_0+\dl}{\sum}e^{-\th \ell}g(X_\ell)|\Fc_{\t_0}] +\E[-e^{-\th (\t_0+\dl)}\Psi(\z_0)+Y_{\t_0+\dl}(\t_0,\z_0)|\Fc_{\t_0}].\\
\end{array}
$$
Therefore, we obtain from \eqref{Y-k-nu2} that
\begin{equation}\label{Y-k-nu3}
Y_0(0,0)\ge \E[\sum_{0\le \ell<\t_0+\dl}e^{-\th \ell}g(X_\ell)-e^{-\th (\t_0+\dl)}\Psi(\z_0)+Y_{\t_0+\dl}(\t_0,\z_0)]. 
\ee
By noting that
\begin{equation}\label{Y-k-nu1}
Y_{\t_0+\dl}(\t_0,\z_0)\ge \E[\sum_{\t_0+\dl\le \ell\le\t_1}e^{-\th \ell}g(X_\ell+\z_0)+\co_{\t_1}(\t_0,\z_0)|\Fc_{\t_0+\dl}]
\ee
and 
$$\begin{array}{ll}
\co_{\t_1}(\t_0,\z_0)=
\E[\underset{\t_1+1\le \ell < \t_1+\dl}{\sum}e^{-\th \ell}g(X_\ell+\z_0)|\Fc_{\t_1}] +\max_{\beta \in \U}\E[-e^{-\th (\t_1+\dl)}\Psi(\beta)+Y_{\t_1+\dl}(\t_0,\z_0+\beta)|\Fc_{\t_1}]\\ \qquad\qquad\quad
=
\E[\underset{\t_1+1\le \ell < \t_1+\dl}{\sum}e^{-\th \ell}g(X_\ell+\z_0)|\Fc_{\t_1}] +\max_{\beta \in \U}\E[-e^{-\th (\t_1+\dl)}\Psi(\beta)+Y_{\t_1+\dl}(\t_1,\z_0+\beta)|\Fc_{\t_1}]\\  \qquad\qquad\quad
\ge 
\E[\underset{\t_1+1\le \ell < \t_1+\dl}{\sum}e^{-\th \ell}g(X_\ell+\z_0)|\Fc_{\t_1}] +\E[-e^{-\th (\t_1+\dl)}\Psi(\z_1)+Y_{\t_1+\dl}(\t_1,\z_0+\z_1)|\Fc_{\t_1}],
\end{array}
$$
we get from \eqref{Y-k-nu3}, 
\begin{align*}
Y_0(0,0)&\ge 
\E[\sum_{0\le \ell<\t_0+\dl}e^{-\th \ell}g(X_\ell)+\sum_{\t_0+\dl\le \ell<\t_1+\dl}e^{-\th \ell}g(X_\ell+\z_0)-e^{-\th (\t_0+\dl)}\Psi(\z_0)+\\&
\qq \qq -e^{-\th (\t_1+\dl)}\Psi(\z_1)+Y_{\t_1+\dl}(\t_1,\z_0+\z_1)]\\&=
\E[\sum_{0\le k<\t_1+\dl}e^{-\th k}g(X^\d_k)-\sum_{\ell =0}^1e^{-\th (\ell+\dl)}\Psi(\z_{\ell})+Y_{\t_1+\dl}(\t_1,\z_0+\z_1)].\end{align*}
By repeating this procedure as many times as necessary we get, for any $n\ge 0$,
\begin{align*}
Y_0(0,0)\ge
\E[\sum_{0\le k<\t_n+\dl}e^{-\th k}g(X^\d_k)-\sum_{\ell =0}^n e^{-\th (\ell+\dl)}\Psi(\z_{\ell})+Y_{\t_n+\dl}(\t_n,\z_0+\z_1+\ldots+\z_n)].\end{align*}
Due to the boundedness of $g(\cdot)$ and $\Psi(\cdot)$, the estimate \eqref{estyinf3} and the fact that $\t_n\ge n\dl$, by taking the limit $n\to \infty$ in the above inequality, we finally obtain
$$
Y_0(0,0)=J(\d^*)\ge J(\d),
$$ which ends the proof of the claim, since 
$\d$ is arbitrary in $\ca$.
\end{proof}
\subsection{Bounded $\eps$-optimal strategies} We will show that, for every $\eps>0$, there exists a strategy $\d_\eps$ which allows at most a bounded (i.e., finite and independent of $\omega$) number $n_\eps$ of impulses  such that 
$\sup_{\d\in \ca}J(\d)\le J(\d_\eps)+\eps.$ This property is  very useful in settings where exact optimal strategies may not be easily implementable.

%In fact, we have the following
\begin{proposition} Let $\eps>0$. Then there exists $n_\eps\ge 1$ and $\d_\eps\in\ca_{n_\eps}$ such that 
$$\sup_{\d\in \ca}J(\d)\le J(\d_\eps)+\eps,$$
that is, the strategy $\d_{\eps}$ is $\eps$-optimal. 
\end{proposition}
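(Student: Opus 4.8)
The plan is to read off the $\eps$-optimal strategy directly from the finitely-many-impulses approximation already constructed in Proposition~\ref{P2}. Recall that $\ca_n$ is the set of admissible strategies using at most $n$ impulses, that $\big(Y^n_k(\nu,\xi)\big)_n$ is non-decreasing in $n$ (Proposition~\ref{P1}), uniformly bounded by the constant $C$ of Proposition~\ref{conv}, and increases to $Y_k(\nu,\xi)$; and that, by Theorem~\ref{opt-strat-rn}, $Y_0(0,0)=\sup_{\d\in\ca}J(\d)$. On the other hand, the corollary following Proposition~\ref{P2}, specialized to $(\nu,\xi)=(0,0)$, produces for each $n$ a strategy $\d^*_n=(\t^*_p,\b^*_p)_{p\ge 0}\in\ca_n$ with
$$
Y^n_0(0,0)=\E\Big[\sum_{\ell\ge 0}e^{-\th\ell}g\big(X^{\d^*_n}_\ell\big)-\sum_{p\ge 0}e^{-\th(\t^*_p+\dl)}\Psi(\b^*_p)\Big]=J(\d^*_n).
$$
Hence $\big(J(\d^*_n)\big)_n$ is non-decreasing and converges to $\sup_{\d\in\ca}J(\d)$.

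Given $\eps>0$, I would then pick $n_\eps\ge 1$ large enough that $\sup_{\d\in\ca}J(\d)-Y^{n_\eps}_0(0,0)\le\eps$ — possible by the monotone convergence above — and set $\d_\eps:=\d^*_{n_\eps}$. Since $\d_\eps\in\ca_{n_\eps}$ uses at most $n_\eps$ impulses, with $n_\eps$ a deterministic integer not depending on $\omega$, it is a bounded strategy, and
$$
\sup_{\d\in\ca}J(\d)\le Y^{n_\eps}_0(0,0)+\eps=J(\d_\eps)+\eps,
$$
which is precisely the asserted $\eps$-optimality.

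The proof is thus a soft corollary of the monotone approximation $Y^n\uparrow Y$ together with the representation $Y^n_0(0,0)=J(\d^*_n)$; there is no genuine analytic obstacle. The one point deserving care is the identity $J(\d^*_n)=Y^n_0(0,0)$: one must return to the construction in Proposition~\ref{P2}, use that a strategy in $\ca_n$ has $\t_n=+\infty$ so that its impulse sum terminates after the $(n-1)$-st term, note that at $(\nu,\xi)=(0,0)$ the indicator $1_{\{\ell\ge\nu\}}$ is identically $1$, and handle the conditioning on $\Fc_0$ (trivial under the convention implicit throughout the paper, and in any case harmless after taking expectations and invoking monotone convergence). A more hands-on alternative, bypassing the corollary, is to truncate the optimal strategy $\d^*$ of Theorem~\ref{opt-strat-rn} after its first $n$ impulses; using $\t^*_p\ge p\dl$ together with the boundedness of $g$ and $\Psi$ and the estimate \eqref{estyinf3}, the payoff of the truncated strategy differs from $J(\d^*)=\sup_{\d\in\ca}J(\d)$ by at most a constant multiple of $e^{-n\dl\th}$, so a sufficiently large $n=n_\eps$ again yields the claim.
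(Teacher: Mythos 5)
Your argument is correct, and it reaches the conclusion by a softer route than the paper does. You exploit the chain $\sup_{\d\in\ca}J(\d)=Y_0(0,0)=\lim_{n}Y^n_0(0,0)=\lim_n J(\d^*_n)$, where the first identity is Theorem \ref{opt-strat-rn}, the second is the monotone convergence of Proposition \ref{conv}, and the third is the corollary to Proposition \ref{P2} specialized to $(\nu,\xi)=(0,0)$; choosing $n_\eps$ with $Y_0(0,0)-Y^{n_\eps}_0(0,0)\le\eps$ then finishes the proof. The paper instead argues quantitatively: it splits $J(\d^*)$ for the optimal strategy $\d^*$ of Theorem \ref{opt-strat-rn} at time $\t^*_{n_\eps}+\dl$, bounds the tail of the running and impulse payoffs using $\t^*_p\ge p\dl$ and the boundedness of $g$ and $\Psi$ (this is essentially the "hands-on alternative" you sketch at the end), and then dominates the truncated payoff by $\sup_{\d\in\ca_{n_\eps}}J(\d)=J(\d_\eps)$. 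What the paper's route buys is an explicit, constructive threshold $n_\eps\ge\lfloor\frac{1}{\dl}\ln(C_0/\eps)\rfloor+1$ depending only on $\|g\|_\infty$, $\|\Psi\|_\infty$, $\dl$ and $\eps$, whereas your monotone-convergence argument yields existence of $n_\eps$ without a rate; on the other hand yours avoids any tail estimate and makes transparent that the result is a formal consequence of $Y^n\uparrow Y$ together with the attainability of $Y^n_0(0,0)$ within $\ca_n$. Note that both proofs ultimately lean on the same nontrivial ingredient, namely the corollary to Proposition \ref{P2} guaranteeing an optimal strategy inside $\ca_{n_\eps}$ (the paper invokes it implicitly when it writes $\sup_{\d\in\ca_{n_\eps}}J(\d)=J(\d_\eps)$), and your remark about the conditioning on $\Fc_0$ is handled the same way the paper handles it, by treating $Y_0(0,0)$ as a number.
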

\begin{proof}
Let $\eps>0$ and $n_\eps$ such that 
\def \ng{\|g\|_\infty}
\def \np{\|\Psi\|_\infty}
\be \lb{tronc}
\sum_{\ell \ge (1+n_\eps)\dl}e^{-\ell\dl}\ng +
\sum_{\ell \ge n_\eps}e^{-(\ell+1)\dl}\np<\eps.
\ee
i.e.,
$$
n_{\eps}\ge \lfloor(\dfrac{1}{\dl}\ln{(\dfrac{C_0}{\eps})}\rfloor+1,
$$
where $C_0:=C_0(g,\psi,\Delta)=\dfrac{e^{-\dl}}{1-e^{-\dl}} (\|g\|_{\infty}+\|\Psi\|_{\infty})$.

We have
\begin{align*}
\sup_{\d \in \ca}J(\d)=J(\d^*)&=
\E[\sum_{0\le \ell<{\t^*_{n_\eps}+\dl}}e^{-\th \ell}g(X^{\d^*}_\ell)-\sum_{0\le \ell\le n_\eps}\Psi (\xi^*_\ell)e^{-\th (\t^*_\ell+\dl)}]\\&\qquad\qquad +\E[\sum_{ \ell\ge {\t^*_{n_\eps}+\dl}}e^{-\th \ell}g(X^{\d^*}_\ell)-\sum_{ \ell\ge 1+n_\eps}\Psi (\xi^*_\ell)e^{-\th (\t^*_\ell+\dl)}]\\
&\le  
\E[\sum_{0\le \ell<{\t^*_{n_\eps}+\dl}}e^{-\th \ell}g(X^{\d^*}_\ell)-\sum_{0\le \ell\le n_\eps}\Psi (\xi^*_\ell)e^{-\th (\t^*_\ell+\dl)}]+\eps,
\end{align*}
since, due to \eqref{tronc}, the remaining terms are smaller than $\eps$. 
But, 
$$
\E[\sum_{0\le \ell<{\t^*_{n_\eps}+\dl}}e^{-\th \ell}g(X^{\d^*}_\ell)-\sum_{0\le \ell\le n_\eps}\Psi (\xi^*_\ell)e^{-\th (\t^*_\ell+\dl)}]\le \sup_{\d \in \ca_{n_\eps}}J(\d)=J(\d_\eps)$$
where $\d_\eps$ is an optimal strategy in $\ca_{n_\eps}$.  Therefore, 
$$\sup_{\d\in \ca}J(\d)\le J(\d_\eps)+\eps,
$$
i.e., the strategy $\d_{\eps}$ is $\eps$-optimal.
\end{proof}
\section{The discrete time risk-sensitive impulse control problem}\lb{risk-s}
In this section, we extend the previous results to the risk-sensitive case where the controller has a utility function of exponential type. We tackle this problem  using a probabilistic approach based on the notion of  the Snell envelope of processes. When the decision maker implements a strategy $\delta=(\tau_n,\xi_n)_{n\ge 0}$, the payoff is given by
\def \r{\rho}
\def \th {\theta}
\begin{equation}\label{reward2}
J(\delta):=\mathbb{E} [\exp\{\r\{\underbrace{\sum_{\ell \ge 0}e^{-\th \t_\ell}g(X^\d_\ell)-\sum_{\ell\geq 0}e^{-\th(\tau_{\t_\ell}+\dl )}\Psi(\xi_{\ell})}_{C(\d)}\}], 
\end{equation}
where $\r>0$ is the risk-sensitive index and $X^\d$ is the dynamics of the controlled system given by 
\begin{equation}\label{X-delta-r}
X^\d_k=X_k 1_{\{k<\t_0+\dl\}}+\sum_{\ell \ge 0}(X_k+\z_0+\dots+ \z_\ell) 1_{\{\t_\ell+\dl \le k<\t_{\ell+1}+\dl\}},\quad k\ge 0,
\end{equation}
which is \eqref{X-delta}.

Exponential utilities are often called risk-sensitive utilities because they capture risk-averse and risk-seeking behaviors of the controller (see \cite{jacobson}). 
%before expectationExponential utilities \textcolor{red}{translate} the attitude with respect to risk of controllers or investors. 
To see this, note that if the index $\r$
is small enough then the risk-sensitive reward function satisfies
\begin{equation}\label{justrisksensitive}
\Gamma(\r,\delta):=\r^{-1}\mbox{Log}(J(\delta))\approx \mathbb{E}[C(\delta)]+\frac{\r}{2}\mbox{var}[C(\delta)],
\end{equation}
where $\mbox{var}[C(\delta)]$ denotes the variance of $C(\d)$. 
Thus, $\lim_{\r \rightarrow 0}\,
\Gamma(\theta,\delta)=\mathbb{E}[C(\delta)]
$ which is the risk-neutral utility function \eqref{rn} studied in Section \ref{rn-case}. %This limit corresponds to the situation termed by risk-neutral one  and studied in the previous part. 
Now, if $\r > 0$, we have $\Gamma(\r,\delta)>\mathbb{E}[C(\delta)]$ meaning that the variance $\mbox{var}[C(\delta)]$ as a measure of risk improves $\Gamma(\r,\delta)$, in which case the optimizer is a risk-seeker. But, when $\r<0$,  $\Gamma(\r,\delta)<\mathbb{E}[C(\delta)]$ meaning that the variance $\mbox{var}[C(\delta)]$ worsens $\Gamma(\r,\delta)$ in which case the optimizer is risk-averse.

For simplicity, we hereafter consider only the case $\r=1$ since the other cases are treated in a similar way. We proceed by recasting the risk-sensitive impulse control problem into an iterative optimal stopping problem, and by exploiting once more the properties of the Snell envelope, we shall be able to characterize recursively an optimal strategy to this discrete time risk-sensitive impulse control problem. 

\subsection{Iterative optimal stopping and properties}
 Let  $\nu$ be a stopping time and $\xi$ an $\mathcal{F}_{\nu}$-measurable random variable. Consider the sequence of processes  $(V^{n}(\nu, \xi))_{n\geq 0}$  defined recursively by
\begin{equation}\label{Y0app}
 V_{k}^{0}(\nu,\xi)= \mathbb{E}\left[\exp\left\lbrace \sum_{\ell\ge k}e^{-\th\ell }g(X_k+\xi)1_{\{\n\ge \ell\}}\right\rbrace|\mathcal{F}_{k}\right],\, k\in \nb, \,\,  \text{ and }V_{\infty}^{0}=V_{\infty}^{0,-}=1.
\end{equation}
For $n\geq 1$, 
\begin{equation} \label{Yapp}
V_{k}^{n}(\nu,\xi)= \esssup\limits_{\tau\ge k}\mathbb{E}\left[\exp\left\lbrace \sum_{k \le \ell \le \t}e^{-\th \ell}g(X_\ell+\xi)1_{\{\ell\ge \n\}}\right\rbrace \Theta_{\tau}^{n}(\nu,\xi) \,|\, \mathcal{F}_{k}\right],\,\, k\in \bbn,
\end{equation}
where for $k\in \nb$, 
\begin{eqnarray*}
\Theta_{k}^{n}(\nu,\xi)= \max\limits_{\beta \in U} \bigg\lbrace  \mathbb{E}\bigg\lbrack \exp\bigg\lbrace \sum_{k+1\le \ell <k+\dl}e^{-\th \ell}g(X_\ell+\xi)1_{\{\ell \ge \n\}}-e^{-\th(k+\Delta)}\Psi(\beta) \bigg\rbrace V_{k+\Delta}^{n-1}(\nu,\xi+\beta)|\mathcal{F}_{k}\bigg\rbrack \bigg\rbrace.
\end{eqnarray*}and 
$\Theta_{\infty}^{n}(\nu,\xi)=1.$ \\\\ 
In the next proposition, we collect some properties of the sequence of processes 
$(V_k^{n}(\nu,\xi))_{k\in \bbn}$, $\,n\ge 0$.
\begin{proposition} The following properties hold true. 
\begin{itemize}
\item[(a)] For any $n\ge 0$ and any pair $(\n,\xi)$, the processes $(V_k^{n}(\nu,\xi))_{k\in \bbn}$ are well-posed. 
\item[(b)](Monotonicity). For any $\nx$, the sequence of processes $((V_k^{n}(\nu,\xi))_{k\in \bbn})_{n\ge 0}$ is non-decreasing in $n$, i.e., for any $n\ge0$ and any pair $(\nu,\xi)$, 
\def \lb{\label}
\begin{equation} \label{monotonicity-rs}
0\le V^n_k(\nu,\xi)\le V^{n+1}_k(\nu,\xi),\quad k\ge 0.
\end{equation}
\item[(c)] (Consistency). Let $\nu$ be a stopping time and $\xi$ be a $\U$-valued and $\Fc_{\nu}$-measurable r.v. Then, for any $\F$-stopping time $\nu'\ge \nu$, $\P$-a.s, it holds that, for any $n\ge 0$, 
\begin{align}\lb {consiscdt}V^{n}_k(\nu,\xi)=V^{n}_k(\nu',\xi), \quad \,\,k\ge \nu'.\end{align}
\end{itemize}
\end{proposition}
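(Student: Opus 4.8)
The plan is to establish all three properties by induction on $n$, in close parallel with the treatment of the risk-neutral scheme in Propositions~\ref{P0}, \ref{P1} and~\ref{cs}. The only structural novelty is that $V^{n-1}$ enters the definition of $\Theta^n$ \emph{multiplicatively}, through a strictly positive exponential factor, rather than additively; this will matter at exactly one point, flagged at the end.

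For part~(a) I would prove by induction on $n$ the quantitative statement that, for every pair $(\nu,\xi)$, the process $(V_k^n(\nu,\xi))_{k\in\bbn}$ is well-defined and satisfies $0<V_k^n(\nu,\xi)\le\exp\{M_n e^{-\theta k}\}$ for every $k\ge 0$, with a constant $M_n$ depending only on $n$, $\|g\|_\infty$, $\|\Psi\|_\infty$ and $\theta$. The base case $n=0$ follows at once from the bound $\bigl|\sum_{\ell\ge k}e^{-\theta\ell}g(X_\ell+\xi)1_{\{\ell\ge\nu\}}\bigr|\le(1-e^{-\theta})^{-1}\|g\|_\infty e^{-\theta k}$, which makes the conditional expectation in \eqref{Y0app} finite and strictly positive. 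For the inductive step, since $\U$ is finite and $g$, $\Psi$ are bounded, every exponent occurring in $\Theta_k^n(\nu,\xi)$ is bounded in absolute value by a constant multiple of $e^{-\theta k}$; combined with the induction hypothesis on $V_{k+\dl}^{n-1}$ this shows that $\Theta_k^n(\nu,\xi)$ is well-defined, strictly positive and dominated by $\exp\{M_n' e^{-\theta k}\}$. Hence the obstacle process $\exp\bigl\{\sum_{k\le\ell\le\tau}e^{-\theta\ell}g(X_\ell+\xi)1_{\{\ell\ge\nu\}}\bigr\}\,\Theta_\tau^n(\nu,\xi)$ is bounded uniformly in the stopping time $\tau$, so the essential supremum in \eqref{Yapp} is a genuine bounded Snell envelope, $V^n$ inherits a bound of the announced form, and letting $k\to\infty$ gives $\lim_k V_k^n=V_\infty^n=1$, which is exactly what well-posedness on $\bbn$ requires.

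For part~(b), positivity $V_k^n(\nu,\xi)\ge 0$ is immediate from part~(a), and the monotonicity I would obtain by induction on $n$, exactly as in Proposition~\ref{P1}. In the base case I would use that for every deterministic $m\ge k$ one has $V_k^1(\nu,\xi)\ge\E\bigl[\exp\{\sum_{k\le\ell\le m}e^{-\theta\ell}g(X_\ell+\xi)1_{\{\ell\ge\nu\}}\}\,\Theta_m^1(\nu,\xi)\,\big|\,\Fc_k\bigr]$, and then let $m\to\infty$: the integrand is bounded by part~(a), the exponent converges to $\sum_{\ell\ge k}e^{-\theta\ell}g(X_\ell+\xi)1_{\{\ell\ge\nu\}}$, and $\Theta_m^1(\nu,\xi)\to\Theta_\infty^1(\nu,\xi)=1$, so conditional dominated convergence yields $V_k^1(\nu,\xi)\ge V_k^0(\nu,\xi)$. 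For the inductive step, assuming $V^n\le V^{n+1}$ for all $k$ and all pairs, the key point is that the exponential factor multiplying $V_{k+\dl}^n$ in the definition of $\Theta_k^{n+1}$ is strictly positive, so this inequality propagates to $\Theta_k^{n+1}(\nu,\xi)\le\Theta_k^{n+2}(\nu,\xi)$; feeding it into \eqref{Yapp}, whose exponential prefactor is the same for the two indices and positive, gives $V_k^{n+1}(\nu,\xi)\le V_k^{n+2}(\nu,\xi)$.

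For part~(c) I would once more induct on $n$, imitating Proposition~\ref{cs}. In the base case, for $k\ge\nu'$ every summation index satisfies $\ell\ge k\vee\nu'\ge\nu'\ge\nu$, so $1_{\{\ell\ge\nu\}}=1=1_{\{\ell\ge\nu'\}}$ and the two exponents defining $V_{k\vee\nu'}^0(\nu,\xi)$ and $V_{k\vee\nu'}^0(\nu',\xi)$ coincide. For the inductive step, the same indicator remark, the $\Fc_\nu$-measurability of $\xi+\beta$ (since $\xi$ is $\Fc_\nu$-measurable and $\beta\in\U$ is deterministic), and the induction hypothesis applied to $V_{k\vee\nu'+\dl}^n(\,\cdot\,,\xi+\beta)$ together give $\Theta_{k\vee\nu'}^{n+1}(\nu,\xi)=\Theta_{k\vee\nu'}^{n+1}(\nu',\xi)$, and substituting into \eqref{Yapp} yields $V_k^{n+1}(\nu,\xi)=V_k^{n+1}(\nu',\xi)$ for $k\ge\nu'$. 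I expect the main obstacle to be not conceptual but bookkeeping in part~(a): one has to calibrate the $\exp\{M_n e^{-\theta k}\}$ bounds carefully enough that the essential suprema in \eqref{Yapp} are legitimate bounded Snell envelopes and that every passage to a limit --- in $k$ in part~(a), and in the truncation parameter $m$ in part~(b) --- is covered by conditional dominated convergence. The one genuinely new feature compared with the risk-neutral argument is the inductive step of part~(b): there the additive dependence of $\co^{n+1}$ on $Y^n$ made the propagation $Y^n\le Y^{n+1}\Rightarrow\co^{n+1}\le\co^{n+2}$ automatic, whereas here $V^{n-1}$ sits inside $\Theta^n$ under a positive exponential multiplier, so one must explicitly invoke positivity to transfer the inequality.
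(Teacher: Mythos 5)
Your proposal is correct and follows essentially the same route as the paper: induction on $n$ for all three parts, with the bound $V_k^n\le\exp\{M_ne^{-\theta k}\}$ (the paper's explicit constant is $\|g\|_\infty(1-e^{-\theta})^{-1}e^{-\theta k}+\|\Psi\|_\infty\sum_{j=1}^n e^{-\theta(k+j\Delta)}$), monotonicity propagated through $\Theta^{n+1}\le\Theta^{n+2}$, and consistency via the indicator identity together with the $\Fc_\nu$-measurability of $\xi+\beta$. Your only addition is to make explicit the role of strict positivity of the exponential multiplier in the inductive step of (b) and the two-sided bound forcing $\Theta^1_m\to 1$ in its base case, details the paper leaves implicit.
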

\begin{proof} (a) For $n=0$ and any $\nx$, the process $(V_k^{0}(\nu,\xi))_{k\in \bbn}$ is well posed, since $g$ is bounded. Moreover $$|V_k^{0}(\nu,\xi)|\le \exp{\{(1-e^{-\th})^{-1}\|g\|_\infty e^{-\th k}\}}, \quad  k\in \bbn.$$
Next, the boundedness of $g$, $\psi$ and $(V^0_k\nx)_{k\in \bbn}$ uniformly in $\nx$, implies that 
$$\E[\sup_{k\in \bbn}|\Theta_{k}^{1}(\nu,\xi)|]<\infty.
$$
So, $(V^1_k\nx)_{k\in \nb}$ is well-posed. Furthermore, $V^1_\infty\nx=\Theta^1_\infty\nx=1.$ Finally, for any $k\in \bbn$, 
\def \ngf {\|g\|_{\infty}}
\def \cth {(1-e^{-\t})^{-1}}
\begin{align*}\begin{array}{ll}
|\Theta_{k}^{1}(\nu,\xi)|&\le \exp\{\|g\|_{\infty}(1-e^{-\th})^{-1}(e^{-\th (k+1)}-e^{-\th (k+\dl)})+e^{-\th (k+\dl)}\|\Psi\|_\infty\} \exp{\{(1-e^{-\th})^{-1}\|g\|_\infty e^{-\th (k+\dl)}\}}\\ \\
&\le \exp\{\|g\|_{\infty}(1-e^{-\th})^{-1}e^{-\th (k+1)}+e^{-\th (k+\dl)}\|\Psi\|_\infty\}.
\end{array}
\end{align*}
\def \qq {\qquad}
Hence,  for any $k\in \bbn$,
$$
|V_k^{1}(\nu,\xi)|\le 
\exp\{\|g\|_{\infty}(1-e^{-\th})^{-1}e^{-\th k}+e^{-\th (k+\dl)}\|\Psi\|_\infty\}.
$$
Assume now that, for some $n$ and for any pair $\nx$, the process 
$(V^n_k(\nx))_{k\in \bbn}$ is well-posed (
$V^n_\infty(\nx)=1$) and for any $k\ge 0$,  
$$
|V_k^{n}(\nu,\xi)|\le 
\exp\{\|g\|_{\infty}(1-e^{-\th})^{-1}e^{-\th k}+ \|\Psi\|_\infty\sum_{j=1}^ne^{-\th (k+j\dl)}\}.
$$
Therefore,
\begin{align*}
|\Theta_{k}^{n+1}(\nu,\xi)|&\le \max\limits_{\beta \in U} \bigg\lbrace  \mathbb{E}\bigg\lbrack \exp\bigg\lbrace \sum_{k+1\le \ell <k+\dl}e^{-\th \ell}g(X_\ell+\xi)1_{\{\ell \ge \n\}}-e^{-\th(k+\Delta)}\Psi(\beta) \bigg\rbrace |V_{k+\Delta}^{n}(\nu,\xi+\beta)||\mathcal{F}_{k}\bigg\rbrack \bigg\rbrace\\
&\le \exp\{\|g\|_{\infty}(1-e^{-\th})^{-1}(e^{-\th (k+1)}-e^{-\th (k+\dl)})+e^{-\th (k+\dl)}\|\Psi\|_\infty\}\\&\qq\qq\times \exp\{\|g\|_{\infty}(1-e^{-\th})^{-1}e^{-\th (k+\dl)}+  \|\Psi\|_\infty \sum_{j=1}^ne^{-\th(k+\dl+j\dl)}\}\\&
=\exp\{\|g\|_{\infty}(1-e^{-\th})^{-1}e^{-\th (k+1)}+  \|\Psi\|_\infty \sum_{j=1}^{n+1}e^{-\th(k+j\dl)}\},
\end{align*}
which implies that $(V_{k}^{n+1}(\nu,\xi))_{k\in \bbn}$ is well-posed, $V_{\infty}^{n+1}(\nu,\xi)=1$, and for any $k\ge 0$,
\begin{align*}
|V_{k}^{n+1}(\nu,\xi)|\le 
\exp\{\|g\|_{\infty}(1-e^{-\th})^{-1}e^{-\th k}+ \|\Psi\|_\infty\sum_{j=1}^{n+1}e^{-\th (k+j\dl)}\}.
\end{align*}
Consequently, for any $n\ge 0$ and any pair $\nx$, $(V_k^n\nx)_{k\in \bbn}$ is well-posed and the following estimate holds true, for any $n\ge 0$, any pair $\nx$ and and $k\in \bbn$:
\begin{align}\lb{estvn}
|V_k^{n}(\nu,\xi)|\le 
\exp\{\|g\|_{\infty}(1-e^{-\th})^{-1}e^{-\th k}+ \|\Psi\|_\infty\sum_{j=1}^ne^{-\th (k+j\dl)}\}.
\end{align}
(b) The fact that, for any $n\ge 0$, any pair $\nx$ and $k\ge 0$, 
$V_k^{n}(\nu,\xi)\le V_k^{n+1}(\nu,\xi)$ can be obtained by induction since, obviously, for any pair $\nx$ and $k\in \bbn$, 
$V_k^{0}(\nu,\xi)\le V_k^{1}\nx$. Finally, it is enough to take into account the definition of $V_k^{n}(\nu,\xi)$, $n\in \bbn$.

\noindent (c) We now focus on the consistency condition. Once more it will be obtained by induction. So let us consider the case $n=0$. Let $\nu$ and $\n'$ two stopping times such that $\n\le \n'$ and $\xi$ be an $\U$-valued and $\F_\n$-measurable random variable.
For any $k\in \bbn$,
\begin{equation}\label{Y0appx1}
 V_{k}^{0}(\nu',\xi)= \mathbb{E}\left[\exp\left\lbrace \sum_{\ell\ge k}e^{-\th\ell }g(X_k+\xi)1_{\{\ell \ge \n'\}}\right\rbrace|\mathcal{F}_{k}\right]
\end{equation} and 
\begin{align*}
 V_{k}^{0}(\nu,\xi)&= \mathbb{E}\left[\exp\left\lbrace \sum_{\ell\ge k}e^{-\th\ell }g(X_k+\xi)1_{\{\ell \ge \n\}}\right\rbrace|\mathcal{F}_{k}\right]\\&
 =\mathbb{E}\left[\exp\left\lbrace \sum_{\ell\ge k}e^{-\th\ell }
 g(X_k+\xi)1_{\{\ell\ge\n' \}}\right\rbrace \exp\left\lbrace \sum_{\ell\ge k}e^{-\th\ell }
 g(X_k+\xi)1_{\{\n\le \ell<\n' \}}\right\rbrace|\mathcal{F}_{k}\right].
\end{align*}
Therefore, for any $k\ge 0$, 
$$
V_{k\vee \n'}^{0}(\nu,\xi)=V_{k\vee \n'}^{0}(\nu',\xi).
$$
Next, assume that the property holds for some $n$, i.e., for any two stopping times $\n\le \n'$ and $\xi$ a $\U$-valued and $\F_\n$-measurable random variable we have  
$$V^{n}_k(\nu,\xi)=V^{n}_k(\nu',\xi), \quad \,\,k\ge \nu'.$$
Then we have: For any $k\ge \n'$,
\begin{align*}
\Theta_{k}^{n+1}(\nu,\xi)&= \max\limits_{\beta \in U} \bigg\lbrace  \mathbb{E}\bigg\lbrack \exp\bigg\lbrace \sum_{k+1\le \ell <k+\dl}e^{-\th \ell}g(X_\ell+\xi)1_{\{\ell \ge \n'\}}-e^{-\th(k+\Delta)}\Psi(\beta) \bigg\rbrace V_{k+\Delta}^{n}(\nu',\xi+\beta)|\mathcal{F}_{k}\bigg\rbrack \bigg\rbrace\\
&=\Theta_{k}^{n+1}(\nu',\xi).
\end{align*}
since, on the one hand, $1_{\{\ell \ge \n\}}=1_{\{\ell \ge \n'\}}+1_{\{\n'>\ell \ge \n\}}$ and, on the other hand, by using the induction assumption it holds that $V_{k+\Delta}^{n}(\nu,\xi+\beta)=V_{k+\Delta}^{n}(\nu',\xi+\beta)$, as $\beta$ is deterministic and then $\xi+\b$ is $\F_\n$-measurable. Going back to the definitions of $V_{k}^{n+1}(\nu,\xi)$ and $V_{k}^{n+1}(\nu',\xi)$ in \eqref{Yapp} we then obtain, for any $k\ge\n'$, 
$$V_{k}^{n+1}(\nu,\xi)=V_{k}^{n+1}(\nu',\xi)$$
which is the desired result and the consistency property holds for every $n\ge 0$. 
\end{proof}
Next, we have the following properties.
\begin{proposition}\label{conv-rs} There exists a constant $C$ such that, for any pair 
$(\nu,\xi)$ and any $k\ge 0$,
\begin{equation}\label{V-n-k-bound}
|V^n_k(\nu,\xi)|\le C,\qquad k\ge 0.
\end{equation}
Consequently, the sequence of processes $(V_k^n(\nu,\xi))_{k\ge 0}$ converges $\P$-a.s. Moreover, the limit process 
$$
V_k(\nu,\xi):=\lim_{n\rw \infty}V^n_k(\nu,\xi),\quad k\ge 0,
$$
satisfies
\begin{equation}\label{V-k-nu}
V_k(\nu,\xi)=\esssup_{\t\ge k}
\E\bigg\lbrack\exp\bigg\lbrace \sum_{k\le \ell\le\t}e^{-\th \ell}g(X_\ell+\xi)1_{\{\ell \ge \n\}}\bigg\rbrace\co_\t(\nu,\xi)|\Fc_k\bigg\rbrack, \quad k\ge 0,
\ee
where
\begin{equation}\label{obst-V}
\begin{array}{lll}
\Theta_k(\nu,\xi)= \max\limits_{\beta \in U} \left\lbrace  \mathbb{E}\left\lbrack \exp\left\lbrace \underset{k+1\le \ell <k+\dl}{\sum}\,e^{-\th \ell}g(X_\ell+\xi)1_{\{\ell \ge \n\}} \right.\right. \right. \\ \left. \left. \left. \qquad\qquad\qquad \qquad\qquad\qquad \qquad\qquad  -e^{-\th(k+\Delta)}\Psi(\beta) \right\rbrace V_{k+\Delta}(\nu,\xi+\beta)|\mathcal{F}_{k}\right\rbrack \right\rbrace,\quad k\ge 0. 
\end{array}
\end{equation}
Furthermore, 
for any two $\F$-stopping times $\nu$ and $\nu^{\prime}$ such that $\nu\leq \nu^{\prime}$ and any $\mathcal{F}_{\nu}$-measurable r.v. $\xi$, we have $\mathbb{P}$-a.s.,
\be\lb{consist-rs}
V_k(\nu, \xi)=V_k(\nu^{\prime}, \xi),\qquad k\geq \nu'. 
\ee
Finally, \be\lb{V-infty}
V_{\infty}^-(\nu,\xi)=\lim_{k\rwi}V_k(\nu,\xi)\le 1=\Theta_{\infty}(\nu,\xi)=V_{\infty}(\nu,\xi).
\ee
\end{proposition}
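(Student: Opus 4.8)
The plan is to reproduce the scheme of the proof of Proposition~\ref{conv}, with the additive running reward replaced by its exponential and the Snell envelope of a finite sum replaced by a \emph{multiplicatively renormalized} Snell envelope. First I would read off the uniform bound \eqref{V-n-k-bound} directly from the estimate \eqref{estvn}: since $\sum_{j=1}^{n}e^{-\th(k+j\dl)}\le e^{-\th k}\frac{e^{-\th\dl}}{1-e^{-\th\dl}}$ and $e^{-\th k}\le1$ for $k\ge0$, one gets $0<V^0_k(\nu,\xi)\le V^n_k(\nu,\xi)\le C$ for all $k\ge0$, with $C:=\exp\{c\}$ and $c:=\|g\|_\infty(1-e^{-\th})^{-1}+\|\Psi\|_\infty\frac{e^{-\th\dl}}{1-e^{-\th\dl}}$, and in fact the sharper $n$-uniform bound $|V^n_k(\nu,\xi)|\le\exp\{c\,e^{-\th k}\}$. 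By the monotonicity property, $(V^n_k(\nu,\xi))_n$ is non-decreasing and bounded, hence converges $\P$-a.s.\ to a process $V_k(\nu,\xi)$ which inherits $|V_k(\nu,\xi)|\le\exp\{c\,e^{-\th k}\}$; letting $k\to\infty$ gives the inequality $V_\infty^-(\nu,\xi)\le1$ in \eqref{V-infty}, while $1=\Theta_\infty(\nu,\xi)=V_\infty(\nu,\xi)$ holds by definition, since $\Theta^n_\infty=V^n_\infty=1$ for every $n$. Because $\U$ is finite and the exponential weights are bounded, conditional dominated convergence (using $0\le V^{n-1}_{k+\dl}(\nu,\xi+\beta)\le C$) yields $\Theta^n_k(\nu,\xi)\to\Theta_k(\nu,\xi)$ $\P$-a.s., with $\Theta_k$ as in \eqref{obst-V}, and the consistency property \eqref{consist-rs} follows by passing to the limit $n\to\infty$ in part (c), exactly as Corollary~\ref{csy} is deduced from Proposition~\ref{cs}.

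The heart of the matter is the characterization \eqref{V-k-nu}. Imitating the renormalization in \eqref{Y-tilde-n}, I would set $A_k(\nu,\xi):=\exp\{\sum_{\ell\le k-1}e^{-\th\ell}g(X_\ell+\xi)1_{\{\ell\ge\nu\}}\}$; on $\{\ell\ge\nu\}\cap\{\ell\le k-1\}$ the random variable $\xi$ is $\Fc_{k-1}$-measurable, so $A_k(\nu,\xi)$ is $\Fc_k$-measurable, strictly positive, and bounded between $e^{-\|g\|_\infty(1-e^{-\th})^{-1}}$ and $e^{\|g\|_\infty(1-e^{-\th})^{-1}}$. Multiplying the defining formula \eqref{Yapp} by the $\Fc_k$-measurable factor $A_k(\nu,\xi)$ and absorbing it into the exponential, the process $\widetilde V^n_k(\nu,\xi):=A_k(\nu,\xi)V^n_k(\nu,\xi)$ is seen to equal $\esssup_{\t\ge k}\E[\widetilde L^n_\t(\nu,\xi)|\Fc_k]$; that is, it is the Snell envelope of the bounded obstacle $\widetilde L^n_k(\nu,\xi):=\exp\{\sum_{\ell\le k}e^{-\th\ell}g(X_\ell+\xi)1_{\{\ell\ge\nu\}}\}\,\Theta^n_k(\nu,\xi)$, with terminal value $\widetilde L^n_\infty(\nu,\xi)=\widetilde V^n_\infty(\nu,\xi)=\exp\{\sum_{\ell\ge0}e^{-\th\ell}g(X_\ell+\xi)1_{\{\ell\ge\nu\}}\}$; in particular it satisfies the recursion $\widetilde V^n_k=\max\{\E[\widetilde V^n_{k+1}|\Fc_k],\widetilde L^n_k\}$, as in \eqref{eq2-tilde-n}. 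Letting $n\to\infty$, $\widetilde V^n_k\uparrow\widetilde V_k:=A_k(\nu,\xi)V_k(\nu,\xi)$ and $\widetilde L^n_k\uparrow\widetilde L_k:=\exp\{\sum_{\ell\le k}e^{-\th\ell}g(X_\ell+\xi)1_{\{\ell\ge\nu\}}\}\,\Theta_k(\nu,\xi)$, all dominated by a constant, so conditional dominated convergence transfers the recursion to the limit: $\widetilde V_k=\max\{\E[\widetilde V_{k+1}|\Fc_k],\widetilde L_k\}$ with $\widetilde V_\infty=\widetilde L_\infty$. Exactly as in \eqref{eq2-tilde-nxx}, this identifies $(\widetilde V_k)_{k\ge0}$ with the Snell envelope of $(\widetilde L_k)_{k\ge0}$, so $\widetilde V_k=\esssup_{\t\ge k}\E[\widetilde L_\t|\Fc_k]$; dividing by the $\Fc_k$-measurable factor $A_k(\nu,\xi)$ and cancelling $\exp\{\sum_{\ell\le k-1}\cdots\}$ against $\exp\{\sum_{\ell\le\t}\cdots\}$ yields \eqref{V-k-nu}.

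The main obstacle I anticipate is this renormalization step: one has to move carefully from the ``normalized at time $k$'' formulation \eqref{Yapp} to an absolute multiplicative Snell envelope, verify the $\Fc_k$-measurability and two-sided boundedness of $A_k(\nu,\xi)$, pin down the terminal value at $k=\infty$ so that in this infinite-horizon discrete-time setting the recursion genuinely characterizes the Snell envelope (this is where the $e^{-\th k}$ decay of the bounds is used, guaranteeing that $\widetilde L_k\to\widetilde L_\infty$ as $k\to\infty$ and that the envelope is well-posed on $\bbn$), and justify interchanging $\lim_{n\to\infty}$ with the conditional expectations both in the obstacle ($\Theta^n_k\to\Theta_k$) and in the Snell recursion. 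The remaining ingredients---the uniform bound, the $\P$-a.s.\ convergence, and the consistency---are essentially transcriptions of the corresponding steps of the risk-neutral case.
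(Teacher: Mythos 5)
Your proposal is correct and follows essentially the same route as the paper: the uniform bound read off from \eqref{estvn}, monotone convergence of $V^n_k(\nu,\xi)$, passage to the limit in the obstacle and in the Snell-envelope characterization via the multiplicative renormalization (which the paper carries out explicitly only in the risk-neutral case and merely sketches here), and consistency by letting $n\to\infty$ in \eqref{consiscdt}. The one point the paper makes more explicit is the existence of $\lim_{k\to\infty}V_k(\nu,\xi)$ in \eqref{V-infty}, obtained because $A_k(\nu,\xi)V_k(\nu,\xi)$ is a bounded supermartingale while $A_k(\nu,\xi)$ converges to a positive bounded limit --- ingredients your renormalization already supplies, so you should just state that conclusion.
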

\def \rw{\rightarrow}
\def \rwi{\rightarrow \infty}
\begin{proof} The sequence of processes $(V^n\nx)_{n\ge 0}$ is non-decreasing and we know, by \eqref{estvn}, that 
$\forall n\ge 0$, $\forall \nx$, $\forall k\in \bbn,$
\begin{align}\lb{estvn2}
|V_k^{n}(\nu,\xi)|&\le 
\exp\{\|g\|_{\infty}(1-e^{-\th})^{-1}e^{-\th k}+ \|\Psi\|_\infty\sum_{j=1}^ne^{-\th (k+j\dl)}\}\nn\\&
\le 
\exp\{\|g\|_{\infty}(1-e^{-\th})^{-1}e^{-\th k}+ \|\Psi\|_\infty e^{-\th (k+\dl)}(1-e^{-\theta\dl})^{-1}\}\nn\\
&
\le 
\exp\{\|g\|_{\infty}(1-e^{-\th})^{-1}+ \|\Psi\|_\infty e^{-\th \dl}(1-e^{-\theta\dl})^{-1}\}:=C
\end{align}
Therefore, the limiting process $V_k\nx:=\lim_{n\rw \infty}V^n_k\nx$, $k\in \bbn$, exists and for any $k\in \bbn$, 
\begin{align}
\lb{estv1}|V_k(\nu,\xi)|&
\le 
\exp\{\|g\|_{\infty}(1-e^{-\th})^{-1}e^{-\th k}+ \|\Psi\|_\infty e^{-\th (k+\dl)}(1-e^{-\theta\dl})^{-1}\}\\
&
\le 
\exp\{\|g\|_{\infty}(1-e^{-\th})^{-1}+ \|\Psi\|_\infty e^{-\th \dl}(1-e^{-\theta\dl})^{-1}\}:=C.\nn
\end{align}
Next, since $(V^n_k\nx)_{k\ge 0}$ satisfies \eqref{Yapp} and is non-decreasing in $n$, then $(V_k\nx)_{k\ge 0}$ satisfies \eqref{V-k-nu}, since $\U$ is finite. Furthermore, note that 
$
\lim_{k\rwi}V_k(\nu,\xi)$ exists since 
$(V_k(\nu,\xi).\exp\lbrace \sum_{1\le \ell<k}e^{-\th \ell}g(X_\ell+\xi)1_{\{\ell \ge \n\}}\rbrace)_{k\ge 0}$ is a bounded supermartingale and 
$(\exp\lbrace \sum_{1\le \ell<k}e^{-\th \ell}g(X_\ell+\xi)1_{\{\ell \ge \n\}}\rbrace)_{k\ge 0}$ is convergent w.r.t. $k$ and its limit is positive and bounded. Now by \eqref{estv1}, taking the limit as $k\rwi$, we obtain \eqref{V-infty}. Finally, the consistency condition \eqref{consist-rs} is an immediate consequence of \eqref{consiscdt} satisfied by $V\nx$.

\end{proof}
\medskip
We end this section with its main result of this section whose proof is similar to that of Theorem \ref{opt-strat-rn}. 

\begin{theorem}\lb{opt-strat-rs} The discrete time risk-sensitive impulse control with delay problem admits an optimal strategy $\d^*=(\t_n^*,\xi_n^*)_{n\ge 0}$. Moreover,
$$
V_0(0,0):=V_0(\n,\xi)_{|(\n,\xi)=(0,0)}=\sup_{\d \in \ca}J(\d)=J(\d^*).
$$
More precisely, such a strategy $\d^*=(\t_n^*,\xi_n^*)_{n\ge 0}$ is defined as follows.

\nd (i) $$
\t^*_0=\inf\{\ell \ge \Delta,\,\, 
V_\ell(0,0)=\co_\ell(0,0)\}.
$$
(ii) $\xi_0^*$ is the $\U$-valued and $\Fc_{\t_0^*}$-measurable r.v. such that 
\begin{equation}\lb{eqsimplifo-rs}\begin{array}{lll}
\max_{\beta \in \U}\E\left[\exp{\left\{-e^{-\th (\t^*_0+\dl)}\Psi(\beta)\right\}}V_{\t^*_0+\dl}(0,\beta)|\Fc_{\t^*_0}\right]
 \\ \qquad\qquad =\max_{\beta \in \U}\E\left[\exp{\left\{-e^{-\th (\t^*_0+\dl)}\Psi(\beta)\right\}}V_{\t^*_0+\dl}(\t^*_0,\beta)|\Fc_{\t^*_0}\right]\nn\\ \qquad\qquad\qquad\qquad\qquad =
\E\left[\exp{\left\{-e^{-\th (\t^*_0+\dl)}\Psi(\xi_0^*)\right\}}V_{\t^*_0+\dl}(\t^*_0,\xi_0^*)|\Fc_{\t^*_0}\right].
\end{array}
\end{equation}
 \nd(iii) For $n\ge 1$,
$$
\t^*_n=\inf\{\ell \ge \Delta+\t^*_{n-1},\,\, 
V_\ell(\t^*_{n-1},\xi_0^*+\ldots+\xi_{n-1}^*)=\co_\ell(\t^*_{n-1},\xi_0^*+\ldots+\xi_{n-1}^*)\}
$$
(iv) $\xi_n^*$ is the $\U$-valued and $\Fc_{\t_0^*}$-measurable r.v. such that
\begin{align}
&\max_{\beta \in \U}\E\left[\exp{\left\{-e^{-\th (\t^*_n+\dl)}\Psi(\beta)\right\}} V_{\t^*_n+\dl}(\t^*_n,\xi_0^*+\ldots+\xi_{n-1}^*+\beta)|\Fc_{\t^*_n}\right]=
\nn \\&\qquad\qquad \qquad \qquad \E\left[\exp{\left\{-e^{-\th (\t^*_n+\dl)}\Psi(\xi_n^*)\right\}}V_{\t^*_n+\dl}(\t^*_n,\xi_0^*+\ldots+\xi_{n-1}^*+\xi_n^*)|\Fc_{\t^*_n}\right].
\end{align}
\end{theorem}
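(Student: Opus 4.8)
The strategy of the proof is to reproduce, in multiplicative form, the argument of Theorem~\ref{opt-strat-rn}: exponential factors play the role that additive increments of the cost played there, and the Snell envelope $(V_k(\nu,\xi))_{k\ge\nu}$ of Proposition~\ref{conv-rs}, normalized by $\exp\{\sum_{\ell<k}e^{-\th\ell}g(X_\ell+\xi)1_{\{\ell\ge\nu\}}\}$, plays the role that $(\widetilde Y_k(\nu,\xi))$ played in the risk-neutral case. Admissibility of $\delta^*=(\tau_n^*,\xi_n^*)_{n\ge0}$ is immediate, exactly as in Theorem~\ref{opt-strat-rn}: each $\tau_n^*$ is the debut of an adapted set, hence an $\F$-stopping time; the spacing $\tau_n^*\ge\tau_{n-1}^*+\dl$ is built into the definition; and since $\U$ is finite and $\Psi$ is bounded, the conditional expectation $\beta\mapsto\E[\exp\{-e^{-\th(\tau_n^*+\dl)}\Psi(\beta)\}V_{\tau_n^*+\dl}(\tau_n^*,\xi_0^*+\cdots+\xi_{n-1}^*+\beta)|\Fc_{\tau_n^*}]$ admits a $\U$-valued, $\Fc_{\tau_n^*}$-measurable maximizer $\xi_n^*$.

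The first main step is to prove $V_0(0,0)=J(\delta^*)$ by unfolding the dynamic programming equation \eqref{V-k-nu}. Because $g$ is bounded and the obstacle satisfies $\Theta_k(\nu,\xi)\to1$ as $k\to\infty$, the classical optimal stopping criterion for Snell envelopes (as in \cite{hamahassa}, p.~184, applied to the normalized process) shows that $\tau_0^*$ of (i) is optimal for the stopping problem \eqref{V-k-nu} with $k=0$, $(\nu,\xi)=(0,0)$, so that
$$
V_0(0,0)=\E\Big[\exp\Big\{\sum_{0\le\ell\le\tau_0^*}e^{-\th\ell}g(X_\ell)\Big\}\,\Theta_{\tau_0^*}(0,0)\Big].
$$
Substituting the definition \eqref{obst-V} of $\Theta_{\tau_0^*}(0,0)$, pulling the $\Fc_{\tau_0^*}$-measurable (bounded) exponential prefix into the conditional expectation, using that $\beta=\xi_0^*$ realizes the maximum and that by the consistency property \eqref{consist-rs} one may replace $V_{\tau_0^*+\dl}(0,\xi_0^*)$ by $V_{\tau_0^*+\dl}(\tau_0^*,\xi_0^*)$, the ranges $\{0,\dots,\tau_0^*\}$ and $\{\tau_0^*+1,\dots,\tau_0^*+\dl-1\}$ concatenate and yield
$$
V_0(0,0)=\E\Big[\exp\Big\{\sum_{0\le\ell<\tau_0^*+\dl}e^{-\th\ell}g(X_\ell)-e^{-\th(\tau_0^*+\dl)}\Psi(\xi_0^*)\Big\}\,V_{\tau_0^*+\dl}(\tau_0^*,\xi_0^*)\Big].
$$
Repeating the same three moves for $V_{\tau_0^*+\dl}(\tau_0^*,\xi_0^*)$ ($\tau_1^*$ is optimal after $\tau_0^*+\dl$, expand $\Theta_{\tau_1^*}$, re-anchor the memory variable at $\tau_1^*$ via \eqref{consist-rs}) and iterating $n$ times gives
$$
V_0(0,0)=\E\Big[\exp\Big\{\sum_{0\le\ell<\tau_n^*+\dl}e^{-\th\ell}g(X^{\delta^*}_\ell)-\sum_{0\le\ell\le n}e^{-\th(\tau_\ell^*+\dl)}\Psi(\xi_\ell^*)\Big\}\,V_{\tau_n^*+\dl}(\tau_n^*,\xi_0^*+\cdots+\xi_n^*)\Big].
$$
Since $\tau_n^*\ge n\dl$, as $n\to\infty$ the truncated exponent converges $\P$-a.s. to $C(\delta^*)$ of \eqref{reward2}, while $V_{\tau_n^*+\dl}(\tau_n^*,\cdots)\to1$ by \eqref{estv1}; all factors being uniformly bounded, dominated convergence gives $V_0(0,0)=\E[\exp\{C(\delta^*)\}]=J(\delta^*)$.

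The second main step, optimality, runs the same computation with inequalities. For an arbitrary admissible $\delta=(\tau_n,\xi_n)_{n\ge0}$, since $\tau_0$ is a stopping time, \eqref{V-k-nu} gives $V_0(0,0)\ge\E[\exp\{\sum_{0\le\ell\le\tau_0}e^{-\th\ell}g(X_\ell)\}\Theta_{\tau_0}(0,0)]$; bounding the maximum in \eqref{obst-V} from below by its value at $\beta=\xi_0$ and using \eqref{consist-rs} to pass from $(0,\xi_0)$ to $(\tau_0,\xi_0)$ yields
$$
V_0(0,0)\ge\E\Big[\exp\Big\{\sum_{0\le\ell<\tau_0+\dl}e^{-\th\ell}g(X_\ell)-e^{-\th(\tau_0+\dl)}\Psi(\xi_0)\Big\}\,V_{\tau_0+\dl}(\tau_0,\xi_0)\Big];
$$
iterating and letting $n\to\infty$ (using $\tau_n\ge n\dl$, boundedness of $g,\Psi$, and $|V_k|\le C$ from Proposition~\ref{conv-rs}) gives $V_0(0,0)\ge J(\delta)$. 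Together with the previous step and arbitrariness of $\delta$, this proves $J(\delta^*)=\sup_{\delta\in\ca}J(\delta)=V_0(0,0)$.

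I expect the main obstacle to be the combinatorial bookkeeping rather than any new idea: one must verify, stage by stage, that the multiplicative prefixes $\exp\{\sum_{k\le\ell\le\tau}\}$ together with the factors $\exp\{\sum_{k+1\le\ell<k+\dl}\}$ emerging from $\Theta$ glue exactly into $\exp\{\sum_{0\le\ell<\tau_n^*+\dl}\}$ evaluated along the controlled trajectory $X^{\delta^*}$, that the indicators $1_{\{\ell\ge\nu\}}$ transform correctly each time the memory variable $\nu$ is re-anchored through \eqref{consist-rs}, and --- the only genuinely analytic point --- that at every stage the debut $\tau_n^*$ is optimal for the corresponding infinite-horizon stopping problem, which is exactly where the convergence $\Theta_k\to1$ and the uniform boundedness of $V$ from Proposition~\ref{conv-rs} are needed.
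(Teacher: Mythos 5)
Your proposal is correct and follows exactly the route the paper intends: the authors state that the proof is "similar to that of Theorem \ref{opt-strat-rn}", and your argument is precisely that multiplicative transcription, using the dynamic programming equation \eqref{V-k-nu}, the consistency property \eqref{consist-rs}, the optimality of the debuts guaranteed by $V_\infty^-(\nu,\xi)\le 1=\Theta_\infty(\nu,\xi)$, and the bounds of Proposition \ref{conv-rs} to pass to the limit. The only cosmetic point is that for the convergence $V_{\tau_n^*+\dl}(\cdot)\to 1$ you should pair the upper bound \eqref{estv1} with the lower bound $V_k\ge V_k^0\ge \exp\{-\|g\|_\infty(1-e^{-\th})^{-1}e^{-\th k}\}$, which is immediate from monotonicity.
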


 Note that the stopping times $\t_n^*$ are optimal because, by \eqref{V-infty},  $\lim_{k\to \infty}V_k\nx\le V_\infty \nx=1$ (see \cite{hamahassa}, p.184).

\section*{Conclusion}

In this paper, we addressed a class of infinite-horizon stochastic impulse control problems in discrete time, with execution delays. Using probabilistic tools, particularly the notion of the Snell envelope of processes, we constructed an optimal control strategy among all admissible strategies for both risk-neutral and risk-sensitive utility functions. Furthermore, we proved the existence of bounded $\varepsilon$-optimal strategies, which is very useful in settings where exact optimal strategies may not be easily implementable. Our approach bridges the gap between theoretical impulse control models and real-world decision-making processes, where delays are inevitable.
Extending the current framework to incorporate random execution delays, state-dependent delays, or systems with constraints on intervention frequency could provide further insight.

\end{document}